\begin{document}
\baselineskip 17.5pt
\hfuzz=6pt

\newtheorem{theorem}{Theorem}[section]
\newtheorem{prop}[theorem]{Proposition}
\newtheorem{lemma}[theorem]{Lemma}
\newtheorem{definition}[theorem]{Definition}
\newtheorem{cor}[theorem]{Corollary}
\newtheorem{example}[theorem]{Example}
\newtheorem{remark}[theorem]{Remark}
\newcommand{\ra}{\rightarrow}
\renewcommand{\theequation}
{\thesection.\arabic{equation}}
\newcommand{\ccc}{{\mathcal C}}
\newcommand{\one}{1\hspace{-4.5pt}1}

\def\R{\mathbb R}
\def\C{\mathbb C}

\newcommand{\bmo}{\rm BMO}

\newcommand{\al}{\alpha}
\newcommand{\be}{\beta}
\newcommand{\ga}{\gamma}
\newcommand{\Ga}{\Gamma}
\newcommand{\de}{\delta}
\newcommand{\ben}{\beta_n}
\newcommand{\De}{\Delta}
\newcommand{\ve}{\varepsilon}
\newcommand{\ze}{\zeta}
\newcommand{\Th}{\chi}
\newcommand{\ka}{\kappa}
\newcommand{\la}{\lambda}
\newcommand{\laj}{\lambda_j}
\newcommand{\lak}{\lambda_k}
\newcommand{\La}{\Lambda}
\newcommand{\si}{\sigma}
\newcommand{\Si}{\Sigma}
\newcommand{\vp}{\varphi}
\newcommand{\om}{\omega}
\newcommand{\Om}{\Omega}

\newcommand{\cs}{\mathcal S}
\newcommand{\csrn}{\mathcal S (\rn)}
\newcommand{\cm}{\mathcal M}
\newcommand{\cb}{\mathcal B}
\newcommand{\ce}{\mathcal E}
\newcommand{\cd}{D}
\newcommand{\cdp}{D'}
\newcommand{\csp}{\mathcal S'}

\newcommand{\lab}{\label}
\newcommand{\med}{\textup{med}}
\newcommand{\pv}{\textup{p.v.}\,}
\newcommand{\loc}{\textup{loc}}
\newcommand{\intl}{\int\limits}
\newcommand{\intlrn}{\int\limits_{\rn}}
\newcommand{\intrn}{\int_{\rn}}
\newcommand{\iintl}{\iint\limits}
\newcommand{\dint}{\displaystyle\int}
\newcommand{\diint}{\displaystyle\iint}
\newcommand{\dintl}{\displaystyle\intl}
\newcommand{\diintl}{\displaystyle\iintl}
\newcommand{\liml}{\lim\limits}
\newcommand{\suml}{\sum\limits}
\newcommand{\supl}{\sup\limits}
\newcommand{\f}{\frac}
\newcommand{\df}{\displaystyle\frac}
\newcommand{\p}{\partial}
\newcommand{\Ar}{\textup{Arg}}
\newcommand{\abssig}{\widehat{|\si_0|}}
\newcommand{\abssigk}{\widehat{|\si_k|}}
\newcommand{\tT}{\tilde{T}}
\newcommand{\tV}{\tilde{V}}
\newcommand{\wt}{\widetilde}
\newcommand{\wh}{\widehat}
\newcommand{\lp}{L^{p}}
\newcommand{\nf}{\infty}
\newcommand{\rrr}{\mathbf R}
\newcommand{\li}{L^\infty}
\newcommand{\rn}{\mathbf R^n}
\newcommand{\tf}{\tfrac}
\newcommand{\zzz}{\mathbf Z}

\newcommand{\MM}{\mathcal{M}}

\newcommand{\contain}[1]{\left( #1 \right)}
\newcommand{\ContainA}[1]{( #1 )}
\newcommand{\ContainB}[1]{\bigl( #1 \bigr)}
\newcommand{\ContainC}[1]{\Bigl( #1 \Bigr)}
\newcommand{\ContainD}[1]{\biggl( #1 \biggr)}
\newcommand{\ContainE}[1]{\Biggl( #1 \Biggr)}

\allowdisplaybreaks

\title[]{A note on weak factorisation of Meyer-type Hardy space via Cauchy integral operator}

\thanks{{\it {\rm 2010} Mathematics Subject Classification:} Primary: 42B35, 42B25.}
\thanks{{\it Key words:}
 Cauchy integral, commutator, weak factorization, Hardy space $H^1$, ${\rm BMO}$, ${\rm VMO}$.}

\author{Yongsheng Han}
\address{Yongsheng Han, Department of Mathematics, Auburn University, Alabama, USA}
\email{hanyong@auburn.edu}

\author{Ji Li}
\address{Ji Li, Department of Mathematics, Macquarie University, Sydney}
\email{ji.li@mq.edu.au}

\author{Cristina Pereyra}
\address{Cristina Pereyra, Department of Mathematics, The University of New Mexico, Albuquerque, USA}
\email{crisp@math.unm.edu}

\author{Brett D. Wick}
\address{Brett D. Wick, Department of Mathematics\\
         Washington University - St. Louis\\
         St. Louis, MO 63130-4899 USA
         }
\email{wick@math.wustl.edu}

\begin{abstract}
This paper provides a weak factorization for the Meyer-type Hardy space $H^1_b(\mathbb{R})$,  and characterizations of its dual ${\rm BMO}_b(\mathbb{R})$ and its predual  ${\rm VMO}_b(\mathbb{R})$ via boundedness and compactness of a suitable commutator with  the Cauchy integral $\mathscr{C}_{\Gamma}$, respectively.
Here $b(x)=1+iA'(x)$ where $A'\in L^{\infty}(\mathbb{R})$, and  the Cauchy integral $\mathscr{C}_{\Gamma}$ is associated to the Lipschitz curve $\Gamma=\{x+iA(x)\, : \, x\in \mathbb{R}\}$. 

\end{abstract}

\maketitle


\bigskip

\section{Introduction and Statement of Main Results}
\setcounter{equation}{0}

Given a bounded function  $b:\mathbb{R}^n\to \mathbb{C}$ such that  ${\rm Re}\, b(x) \geq 1$ for all $x\in \mathbb{R}^n$,  the Meyer-type Hardy space  $H^1_b(\mathbb{R}^n)$  consists of those functions $f:\mathbb{R}^n\to \mathbb{C}$ such that the product $bf$ belongs to the Hardy space  $H^1(\mathbb{R}^n)$. The Meyer-type space of bounded mean oscillation, space ${\rm BMO}_b(\mathbb{R}^n) $ consists of all functions $\frak A: \mathbb{R}^n\to \mathbb{C}$ such  that 
$\frak{A}/b$ belongs to ${\rm BMO}(\mathbb{R}^n)$ and is the dual of $H^1_b(\mathbb{R}^n)$.
These spaces were introduced by Yves Meyer \cite[Chapter XI, Section 10, p. 358]{Me}, in dimension one  in connection to the study of the Cauchy integral  associated with a Lipschitz curve and the $T(b)$ theorem.

In this note we study the Meyer-type Hardy space $H^1_b(\mathbb{R})$ and its dual ${\rm BMO}_b(\mathbb{R})$ for $b(x)=1+iA'(x)$ where $A'\in L^{\infty}(\mathbb{R})$, via the Cauchy integral $\mathscr{C}_{\Gamma}$ associated to the Lipschitz curve $\Gamma=\{x+iA(x)\, : \, x\in \mathbb{R}\}$. We present a weak factorization of $H^1_b(\mathbb{R})$ in terms of the Cauchy integral $\mathscr{C}_{\Gamma}$. We also obtain a characterization of ${\rm BMO}_b(\mathbb{R})$ and of ${\rm VMO}_b(\mathbb{R})$,  the Meyer-type space of vanishing mean oscillation, via boundedness and  compactness of a suitable commutator with the Cauchy integral respectively.

The \emph{Cauchy integral associated with the Lipschitz
curve}~$\Gamma$ is the integral operator $\mathscr{C}_{\Gamma}$ given by
\[
  \mathscr{C}_{\Gamma}(f)(x)
  := {\rm p.v.} \frac{1}{\pi i}\int_{\R} \frac{(1 + iA'(y))f(y)}{y-x + i(A(y) - A(x))}\,dy,
\]
where $f \in C_{c}^{\infty}(\R)$. Note that the Cauchy integral associated with the Lipschitz curve $\Gamma$ is not a standard Calder\'on-Zygmund operator because it lacks smoothness.    
The~$L^p$-boundedness
of~$\mathscr{C}_{\Gamma}$ is equivalent to that of the
related operator $\widetilde{\mathscr{C}}_{\Gamma}$ defined by
\[
  \widetilde{\mathscr{C}}_{\Gamma}(f)(x)
  := {\rm p.v.} \frac{1}{\pi i}\int_{\R} \frac{f(y)}{y-x + i(A(y) - A(x))}\,dy,
\]
Moreover,  the kernel
of~$\widetilde{\mathscr{C}}_{\Gamma}$ satisfies standard  size and smoothness estimates \cite{LNWW} and is therefore bounded on $L^p(\mathbb{R})$ for $p\in (1,\infty)$. Note that in \cite{LNWW} the operator $\mathscr{C}_{\Gamma}$ was denoted $\widetilde{\mathscr{C}}_{\Gamma}$ and viceversa. Hence
while~$\mathscr{C}_{\Gamma}(f)$  is initially defined for~$f \in
C_{c}^{\infty}(\R)$, the operator~$\mathscr{C}_{\Gamma}$ can be extended
to all~$f \in L^{p}(\R)$, for each~$p \in (1,\infty)$.

The related operator $\widetilde{\mathscr{C}}_{\Gamma}$ and its commutator with functions in ${\rm BMO}(\mathbb{R})$ were studied by Li, Nguyen, Ward, and Wick  in \cite{LNWW}. In this setting, one could appeal to a weak factorization for $H^1(\mathbb{R}^n)$ in terms of multilinear  Calder\'on-Zygmund operators,  due to  Li and Wick  \cite[Theorem 1.3]{LW},
to obtain the desired characterization of ${\rm BMO}(\mathbb{R})$ via boundedness of the commutator, and of ${\rm VMO}(\mathbb{R})$ 
via compactness of the commutator.

We want to study  the Meyer-type Hardy space, bounded mean oscillation space,  and vanishing mean oscillation space: $H^1_b(\mathbb{R})$, ${\rm BMO}_b(\mathbb{R})$, and ${\rm VMO}_b(\mathbb{R})$,  via the rougher operator $\mathscr{C}_{\Gamma}$. 
As it turns out, we can derive these results from the results for the related Cauchy integral operator \cite{LNWW}. Nevertheless we also present a direct constructive proof of the weak factorization valid for $H^1_b(\mathbb{R})$ that maybe of independent interest.

We now state  our main results.
For $b(x)= 1+i A'(x)$, we introduce the bilinear form associated with $b$ as follows:
 \begin{equation}\label{bilinear-form}
 \Pi_b(g,h)(x):=\frac{1}{b(x)} \big (g(x)\cdot \mathscr{C}_{\Gamma}(h)(x)-h(x)\cdot \mathscr{C}_{\Gamma}^*(g)(x)\big ),
 \end{equation}
where $\mathscr{C}_{\Gamma}^*$ is the  adjoint operator to $\mathscr{C}_{\Gamma}$.

\begin{theorem}
\label{t:UchiyamaFactor}
For any $f\in H^1_{b}(\mathbb{R})$ there exist  sequence $\{\lambda_j^k\}_{j,k\in\mathbb{Z}}\in \ell^1$ and functions $g_j^{k}, h_j^{k}\in L^\infty(\mathbb{R})$ with compact supports such that $$f=\sum_{k=1}^\infty\sum_{j=1}^{\infty} \lambda_{j}^{k}\, \Pi_b(g_j^{k}, h_j^{k}).$$  Moreover, we have that:
$$
\left\Vert f\right\Vert_{H^1_{b}(\mathbb{R})} \approx \inf\left\{\sum_{k=1}^\infty \sum_{j=1}^{\infty} \left\vert \lambda_j^k\right\vert \left\Vert g_j^k\right\Vert_{L^2(\mathbb{R})}\left\Vert h_j^k\right\Vert_{L^2(\mathbb{R})}: f=\sum_{k=1}^\infty\sum_{j=1}^{\infty}\lambda_j^k \,\Pi_b(g_j, h_j) \right\}.
$$
\end{theorem}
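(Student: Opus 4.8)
The plan is to transfer the known weak factorisation for the classical Hardy space $H^1(\mathbb{R})$ in terms of the related operator $\widetilde{\mathscr{C}}_\Gamma$ over to $H^1_b(\mathbb{R})$ via the multiplication-by-$b$ isometry, and then to unwind the algebra so that the factorisation is expressed through the bilinear form $\Pi_b$ built from $\mathscr{C}_\Gamma$ itself. First I would recall the defining identity: $f\in H^1_b(\mathbb{R})$ exactly when $bf\in H^1(\mathbb{R})$, with $\|f\|_{H^1_b}=\|bf\|_{H^1}$. Since $b(x)=1+iA'(x)$ is bounded and bounded below in real part, multiplication by $b$ (and by $1/b$) is a bounded bijection on $L^2(\mathbb{R})$, so the $L^2\times L^2$ bookkeeping of the coefficients $\lambda_j^k\|g_j^k\|_2\|h_j^k\|_2$ is preserved up to the constants $\|b\|_\infty$ and $\|1/b\|_\infty$. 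The key structural observation is the commutation between $\mathscr{C}_\Gamma$ and $\widetilde{\mathscr{C}}_\Gamma$: by inspection of the kernels one has $\mathscr{C}_\Gamma(h)=\widetilde{\mathscr{C}}_\Gamma(bh)$, and dually $\mathscr{C}_\Gamma^*(g)=b^{-1}\,\widetilde{\mathscr{C}}_\Gamma^*(g)$ up to the appropriate conjugation; substituting these into \eqref{bilinear-form} should turn $\Pi_b(g,h)$ into $b^{-1}$ times a classical-type bilinear expression in $\widetilde{\mathscr{C}}_\Gamma$ applied to $bh$ and $g$.

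The core of the argument is then the weak factorisation of $H^1(\mathbb{R})$ adapted to the operator $\widetilde{\mathscr{C}}_\Gamma$, which (as noted in the excerpt) satisfies standard Calder\'on--Zygmund size and smoothness estimates and is therefore covered by the Li--Wick machinery \cite{LW} for multilinear Calder\'on--Zygmund operators, or alternatively by a direct Uchiyama-type constructive argument. I would carry this out constructively as the excerpt advertises: given $F\in H^1(\mathbb{R})$ with an atomic decomposition $F=\sum_i \mu_i a_i$, for each atom $a_i$ supported on an interval $I_i$ one produces $g_i,h_i\in L^\infty$ with compact support so that the classical bilinear form (the commutator-type expression $g\,\widetilde{\mathscr{C}}_\Gamma h - h\,\widetilde{\mathscr{C}}_\Gamma^* g$) reproduces $a_i$ up to a controlled error whose $H^1$-norm is at most, say, half that of $a_i$; iterating this correction scheme over $k\in\mathbb{Z}$ and summing a geometric series yields the double sum $\sum_k\sum_j \lambda_j^k(\cdots)$ with $\ell^1$ control. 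Pulling this back through multiplication by $1/b$ and by $b$ produces exactly the stated representation $f=\sum_k\sum_j \lambda_j^k\,\Pi_b(g_j^k,h_j^k)$, with the norm equivalence $\|f\|_{H^1_b}\approx \inf\sum|\lambda_j^k|\,\|g_j^k\|_2\|h_j^k\|_2$: the inequality $\lesssim$ comes from the construction, and the reverse inequality $\gtrsim$ from the boundedness of each $\Pi_b$ from $L^2\times L^2$ into $H^1_b$, which in turn follows from the $L^2$-boundedness of $\mathscr{C}_\Gamma$ together with a weak-type / maximal-function argument showing $g\,\mathscr{C}_\Gamma h - h\,\mathscr{C}_\Gamma^* g$ lands in $H^1$ after multiplying by $1/b$.

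I expect the main obstacle to be the lack of smoothness of $\mathscr{C}_\Gamma$ itself: because its kernel carries the non-smooth factor $b(y)=1+iA'(y)$, one cannot apply the classical weak-factorisation theorems directly to $\mathscr{C}_\Gamma$, and the whole point of routing everything through $\widetilde{\mathscr{C}}_\Gamma$ (which \emph{is} a Calder\'on--Zygmund kernel) and the $b$-twisted spaces is to sidestep this. The delicate bookkeeping will be to verify that the algebraic identity $\Pi_b(g,h)=b^{-1}\cdot(\text{classical bilinear form in }\widetilde{\mathscr{C}}_\Gamma)$ holds with the right arguments, taking due care with the adjoint $\mathscr{C}_\Gamma^*$ (its kernel is the conjugate-transpose, so the factor $b$ ends up on the opposite variable, and one must confirm the cancellation that makes $g\,\mathscr{C}_\Gamma h - h\,\mathscr{C}_\Gamma^* g$ collapse to a genuine commutator structure after the substitution). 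A secondary technical point is ensuring that the compact supports and $L^\infty$ bounds of the $g_j^k,h_j^k$ survive multiplication by $b$ and $1/b$ — which they do, since $b,1/b\in L^\infty$ and multiplication does not enlarge supports — so no regularity is lost in the transfer. Once these identifications are in place, the remainder is the standard Uchiyama iteration plus the geometric-series summation, which I would only sketch.
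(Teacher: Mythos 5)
Your overall plan matches the paper: the paper's first proof transfers the Li--Wick weak factorisation for $H^1(\mathbb{R})$ via $\widetilde{\mathscr{C}}_\Gamma$ through the multiplication-by-$b$ isometry, and its second, constructive proof is the Uchiyama iteration you sketch. However, there is a genuine error in the algebra that you would need to run the transfer. You write that ``dually $\mathscr{C}_\Gamma^*(g)=b^{-1}\,\widetilde{\mathscr{C}}_\Gamma^*(g)$ up to the appropriate conjugation.'' Both the power of $b$ and the conjugation are wrong: the pairing here is the bilinear one $\langle f,g\rangle=\int fg$, so there is no conjugation at all, and a direct computation gives
$$
\mathscr{C}_{\Gamma}^*(g)(x) = b(x)\,(\widetilde{\mathscr{C}}_{\Gamma})^*(g)(x),
$$
i.e.\ the factor is $b$, not $b^{-1}$. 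With the correct identity one gets
$$
g\,\mathscr{C}_\Gamma(h)-h\,\mathscr{C}_\Gamma^*(g)
 = g\,\widetilde{\mathscr{C}}_\Gamma(bh)-(bh)\,(\widetilde{\mathscr{C}}_\Gamma)^*(g)
 = \Pi(g,bh),
$$
so that $\Pi_b(g,h)=\tfrac1b\,\Pi(g,bh)$, which is exactly what makes the transfer collapse. If you instead use $b^{-1}$ as you wrote, the second term becomes $(h/b)\,(\widetilde{\mathscr{C}}_\Gamma)^*(g)$, the expression is no longer of the form $\Pi(g,\cdot)$, and the reduction to the classical weak factorisation does not go through.

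Two secondary points. First, you have the two directions of the norm equivalence labeled backwards: the inequality $\Vert f\Vert_{H^1_b}\lesssim\inf\sum|\lambda_j^k|\Vert g_j^k\Vert_2\Vert h_j^k\Vert_2$ is the one that follows from boundedness of $\Pi_b:L^2\times L^2\to H^1_b$, while the reverse inequality is what the construction (atomic decomposition plus approximate factorisation plus geometric iteration) produces. Second, the paper does not prove $\Vert\Pi_b(g,h)\Vert_{H^1_b}\lesssim\Vert g\Vert_2\Vert h\Vert_2$ by a weak-type or maximal-function argument as you suggest; in the constructive route it is a short duality argument, pairing $\Pi_b(g,h)$ against $\frak A\in\mathrm{BMO}_b$, rewriting $\langle\frak A,\Pi_b(g,h)\rangle=\langle g,[\frak A/b,\mathscr{C}_\Gamma](h)\rangle$, and invoking the $L^p$ bound on the commutator; in the transfer route it is inherited directly from the $H^1$ result for $\widetilde{\mathscr{C}}_\Gamma$.
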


The commutator $[g,T]$ of a function $g$ and  an operator $T$ denotes the new operator acting on suitable functions $f$ defined by
$ [g,T](f) := g\,T(f) - T(gf)$. It is well known that a function $a\in \bmo(\mathbb{R})$ (respectively $a\in {\rm VMO}(\mathbb{R})$) if and only if $[a,H]$,  the commutator  of $a$ with  $H$ the Hilbert transform,  is a  bounded operator on $L^p(\mathbb{R})$ \cite{CRW} (respectively is a compact operator \cite{U1}). In \cite{LNWW}, functions $a$ in   $\bmo_b(\mathbb{R})$ (respectively in ${\rm VMO}_b(\mathbb{R})$) were characterized via boundedness (respectively compactness) of $[a,\widetilde{\mathscr{C}}_{\Gamma}]$, the commutator with the related Cauchy operator.
To characterize $\bmo_b(\mathbb{R})$ and ${\rm VMO}_b(\mathbb{R})$ we will consider the commutator
of the Cauchy integral not with functions in $\bmo_b(\mathbb{R})$ or  ${\rm VMO}_b(\mathbb{R})$ but with  those functions divided by the accretive function $b$. In other words we will consider for the  next theorems the commutator $[\frak{A}/b, \mathscr{C}_{\Gamma}]$ where $\frak{A}$ is in  ${\rm BMO}_b(\mathbb{R})$ or in ${\rm VMO}_b(\mathbb{R})$. 

\begin{theorem}\label{th upper}
Let  $b(x) = 1+ i A'(x)$, and $p\in (1,\infty)$. 
If  $\frak A\in {\rm BMO}_{b}(\mathbb{R})$, then we have 
$$
\left \|\left[{\frak A / b}, \mathscr{C}_{\Gamma}\right]\right \|_{L^p(\mathbb{R})\to L^p(\mathbb{R})} \lesssim \| \frak A\|_{{\rm BMO}_{b}(\mathbb{R})}.
$$
Conversely, for any complex function $\frak A$ such that ${\frak A / b}$ is a real-valued function and ${\frak A / b}\in L^1_{loc}(\R)$, if $\left\|\left[{\frak A/ b}, \mathscr{C}_{\Gamma}\right]\right\|_{L^p(\mathbb{R})\to L^p(\mathbb{R})}<\infty$,
then
 $\frak A\in {\rm BMO}_{b}(\mathbb{R})$ with 
$$
 \| \frak A\|_{{\rm BMO}_{b}(\mathbb{R})} \lesssim \left\|\left[{\frak A / b}, \mathscr{C}_{\Gamma}\right]\right \|_{L^p(\mathbb{R})\to L^p(\mathbb{R})}.
$$
\end{theorem}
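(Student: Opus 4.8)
The plan is to reduce the theorem to the corresponding statement for the related operator $\widetilde{\mathscr{C}}_{\Gamma}$ established in \cite{LNWW}. The starting observation is the pointwise identity $\mathscr{C}_{\Gamma}(f)=\widetilde{\mathscr{C}}_{\Gamma}(bf)$, which is immediate on comparing the two defining integral formulas since $b(y)=1+iA'(y)$; equivalently, as operators on a dense class such as $C_c^\infty(\mathbb{R})$ (for which $bf\in L^2(\mathbb{R})$, so the right-hand side is well defined), $\mathscr{C}_{\Gamma}=\widetilde{\mathscr{C}}_{\Gamma}\circ M_b$, where $M_b$ denotes multiplication by $b$. Writing $a:=\mathfrak{A}/b$ and using that $M_b$ commutes with multiplication by $a$, a one-line computation gives
\[
[a,\mathscr{C}_{\Gamma}](f)=a\,\widetilde{\mathscr{C}}_{\Gamma}(bf)-\widetilde{\mathscr{C}}_{\Gamma}(a\cdot bf)=[a,\widetilde{\mathscr{C}}_{\Gamma}](bf),
\]
that is, $[\mathfrak{A}/b,\mathscr{C}_{\Gamma}]=[\mathfrak{A}/b,\widetilde{\mathscr{C}}_{\Gamma}]\circ M_b$.

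Next I would note that $M_b$ is an isomorphism of $L^p(\mathbb{R})$ for every $p\in(1,\infty)$: since ${\rm Re}\,b\ge 1$ one has $1\le|b|\le 1+\|A'\|_{L^\infty(\mathbb{R})}$, so $M_b$ and $M_{b^{-1}}$ are both bounded on $L^p(\mathbb{R})$ and mutually inverse, with norms controlled by $\|A'\|_{L^\infty(\mathbb{R})}$. Hence
\[
\big\|[\mathfrak{A}/b,\mathscr{C}_{\Gamma}]\big\|_{L^p(\mathbb{R})\to L^p(\mathbb{R})}\approx\big\|[\mathfrak{A}/b,\widetilde{\mathscr{C}}_{\Gamma}]\big\|_{L^p(\mathbb{R})\to L^p(\mathbb{R})}.
\]
Applying the commutator theorem of \cite{LNWW} to the symbol $a=\mathfrak{A}/b$: boundedness of $[a,\widetilde{\mathscr{C}}_{\Gamma}]$ on $L^p(\mathbb{R})$ forces $a\in{\rm BMO}(\mathbb{R})$ --- the direction that uses the hypotheses $a$ real-valued and $a\in L^1_{\rm loc}(\mathbb{R})$ --- while conversely $a\in{\rm BMO}(\mathbb{R})$ implies $[a,\widetilde{\mathscr{C}}_{\Gamma}]$ is bounded on $L^p(\mathbb{R})$, with the two-sided equivalence $\|[a,\widetilde{\mathscr{C}}_{\Gamma}]\|_{L^p(\mathbb{R})\to L^p(\mathbb{R})}\approx\|a\|_{{\rm BMO}(\mathbb{R})}$. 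Since $\|\mathfrak{A}\|_{{\rm BMO}_b(\mathbb{R})}=\|\mathfrak{A}/b\|_{{\rm BMO}(\mathbb{R})}$ by definition of the Meyer-type space, chaining the three equivalences yields both inequalities in the statement.

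The argument is structural, so I do not expect a genuine obstacle; the points that require care are (i) justifying the operator identity at the level of principal-value integrals and fixing the dense subspace on which it is first verified, and (ii) matching the structural hypotheses of the converse --- $\mathfrak{A}/b$ real-valued and in $L^1_{\rm loc}(\mathbb{R})$ --- to exactly those under which the lower bound $\|a\|_{{\rm BMO}(\mathbb{R})}\lesssim\|[a,\widetilde{\mathscr{C}}_{\Gamma}]\|_{L^p(\mathbb{R})\to L^p(\mathbb{R})}$ is available in \cite{LNWW}; if that result is recorded there for a single exponent only, a standard extrapolation upgrades it to all $p\in(1,\infty)$, so (ii) is the most delicate step. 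For the forward (upper) estimate one can alternatively argue directly from Theorem \ref{t:UchiyamaFactor}: pairing $[\mathfrak{A}/b,\mathscr{C}_{\Gamma}]g$ against $h$ with $g,h\in L^2(\mathbb{R})$ and using the definition \eqref{bilinear-form} gives $\langle[\mathfrak{A}/b,\mathscr{C}_{\Gamma}]g,h\rangle=\langle\mathfrak{A},\Pi_b(h,g)\rangle$, after which $H^1_b(\mathbb{R})$--${\rm BMO}_b(\mathbb{R})$ duality together with the easy half $\|\Pi_b(h,g)\|_{H^1_b(\mathbb{R})}\lesssim\|g\|_{L^2(\mathbb{R})}\|h\|_{L^2(\mathbb{R})}$ of the weak factorization yields the case $p=2$.
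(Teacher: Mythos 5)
Your proof is correct and follows essentially the same route as the paper: both exploit the identity $[\mathfrak{A}/b,\mathscr{C}_{\Gamma}]=[\mathfrak{A}/b,\widetilde{\mathscr{C}}_{\Gamma}]\circ M_b$ (the paper writes it pointwise as $[\mathfrak{A}/b,\mathscr{C}_{\Gamma}](f)=[\mathfrak{A}/b,\widetilde{\mathscr{C}}_{\Gamma}](bf)$), together with the boundedness of $M_b$ and $M_{1/b}$ on $L^p(\mathbb{R})$, to transfer the two-sided commutator theorem for $\widetilde{\mathscr{C}}_{\Gamma}$ from \cite{LNWW}. The only cosmetic difference is that you package this as ``$M_b$ is an $L^p$-isomorphism'' and invoke the norm equivalence at once, whereas the paper runs the two one-sided estimates separately; your hedge about needing extrapolation in $p$ is unnecessary since \cite{LNWW} is already stated for all $p\in(1,\infty)$.
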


\begin{theorem}\label{compact}
Let $b(x) = 1+ i A'(x)$  and $p\in(1,\infty)$. 
If  $\frak A\in {\rm VMO}_{b}(\mathbb{R})$, then we have 
$
 \left[{\frak A / b}, C_{\Gamma}\right]
$
is compact on $L^p(\R)$.
Conversely, for any complex function  
 $\frak A\in {\rm BMO}_{b}(\mathbb{R})$ such that ${\frak A / b}$ is a real-valued function and ${\frak A / b}\in L^1_{loc}(\R)$, if $
 \left[{\frak A / b}, \mathscr{C}_{\Gamma}\right]
$
is compact on $L^p(\R)$, then 
$\frak A\in {\rm VMO}_{b}(\mathbb{R})$.
\end{theorem}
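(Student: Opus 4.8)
The plan is to reduce both implications to the corresponding statements for the \emph{smooth} Cauchy operator $\widetilde{\mathscr{C}}_\Gamma$ established in \cite{LNWW}, via a single operator identity. First I would record that the defining formulas give $\mathscr{C}_\Gamma(f)=\widetilde{\mathscr{C}}_\Gamma(bf)$ for $f\in C_c^\infty(\mathbb{R})$, since the factor $1+iA'(y)=b(y)$ in the numerator of $\mathscr{C}_\Gamma$ is exactly multiplication by $b$ placed inside $\widetilde{\mathscr{C}}_\Gamma$. Writing $a:=\mathfrak{A}/b$ and letting $M_b$ be the operator of pointwise multiplication by $b$, this yields
\[
[a,\mathscr{C}_\Gamma](f)=a\,\widetilde{\mathscr{C}}_\Gamma(bf)-\widetilde{\mathscr{C}}_\Gamma(b(af))=a\,\widetilde{\mathscr{C}}_\Gamma(bf)-\widetilde{\mathscr{C}}_\Gamma(a(bf))=[a,\widetilde{\mathscr{C}}_\Gamma](bf),
\]
that is, $[\mathfrak{A}/b,\mathscr{C}_\Gamma]=[\mathfrak{A}/b,\widetilde{\mathscr{C}}_\Gamma]\circ M_b$ as operators (on $C_c^\infty(\mathbb{R})$, and hence on $L^p(\mathbb{R})$). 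Because $A'\in L^\infty(\mathbb{R})$ we have $b\in L^\infty(\mathbb{R})$, and because ${\rm Re}\,b\ge 1$ we have $|1/b|\le 1$; thus both $M_b$ and $M_{1/b}$ are bounded on $L^p(\mathbb{R})$, so $M_b$ is an isomorphism of $L^p(\mathbb{R})$ with bounded inverse, and $[\mathfrak{A}/b,\mathscr{C}_\Gamma]$ is compact on $L^p(\mathbb{R})$ \emph{if and only if} $[\mathfrak{A}/b,\widetilde{\mathscr{C}}_\Gamma]$ is (a compact operator composed, on either side, with a bounded one stays compact).

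Granted this equivalence, both halves of the theorem follow from \cite{LNWW}. For the direct implication, $\mathfrak{A}\in{\rm VMO}_b(\mathbb{R})$ means $\mathfrak{A}/b\in{\rm VMO}(\mathbb{R})$; since $\widetilde{\mathscr{C}}_\Gamma$ is a (non-degenerate) Calder\'on-Zygmund operator, the compactness characterization of its commutator proved in \cite{LNWW}---a weak-factorization/Uchiyama-type argument in the spirit of \cite{U1}---gives that $[\mathfrak{A}/b,\widetilde{\mathscr{C}}_\Gamma]$ is compact on $L^p(\mathbb{R})$, and hence so is $[\mathfrak{A}/b,\mathscr{C}_\Gamma]=[\mathfrak{A}/b,\widetilde{\mathscr{C}}_\Gamma]\circ M_b$. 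For the converse, assume $\mathfrak{A}\in{\rm BMO}_b(\mathbb{R})$ with $\mathfrak{A}/b$ real-valued and in $L^1_{loc}(\mathbb{R})$, and that $[\mathfrak{A}/b,\mathscr{C}_\Gamma]$ is compact on $L^p(\mathbb{R})$; composing on the right with the bounded operator $M_{1/b}$ shows $[\mathfrak{A}/b,\widetilde{\mathscr{C}}_\Gamma]$ is compact, and the converse half of the characterization in \cite{LNWW}---which applies precisely because $\mathfrak{A}/b$ is real-valued, locally integrable, and (by hypothesis) lies in ${\rm BMO}(\mathbb{R})$---yields $\mathfrak{A}/b\in{\rm VMO}(\mathbb{R})$, i.e.\ $\mathfrak{A}\in{\rm VMO}_b(\mathbb{R})$.

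Along this route there is essentially no obstacle: the only genuinely new ingredient is the identity $[\mathfrak{A}/b,\mathscr{C}_\Gamma]=[\mathfrak{A}/b,\widetilde{\mathscr{C}}_\Gamma]\circ M_b$, the analytic content being imported wholesale from \cite{LNWW}. If one instead wanted a proof independent of \cite{LNWW}, I would argue the direct implication by approximating $\mathfrak{A}$ in ${\rm BMO}_b(\mathbb{R})$ by functions $\mathfrak{A}_n$ with $\mathfrak{A}_n/b$ smooth and compactly supported---using that ${\rm VMO}_b(\mathbb{R})$ is the ${\rm BMO}_b(\mathbb{R})$-closure of such functions---noting that $[\mathfrak{A}_n/b,\mathscr{C}_\Gamma]$ has integral kernel $\bigl(\mathfrak{A}_n/b(x)-\mathfrak{A}_n/b(y)\bigr)$ times the Cauchy kernel, whose singularity the Lipschitz difference cancels so that this commutator is compact, and then passing to the operator-norm limit by applying Theorem~\ref{th upper} to $\mathfrak{A}-\mathfrak{A}_n$; and I would argue the converse by contradiction, using the weak factorization of Theorem~\ref{t:UchiyamaFactor} (through the bilinear form $\Pi_b$) to manufacture, out of a failure of any one of the three Uchiyama oscillation conditions for $\mathfrak{A}/b$---cubes shrinking to a point, cubes growing without bound, or cubes escaping to infinity---a sequence $\{f_j\}$ bounded in $L^p(\mathbb{R})$, tending weakly to $0$, with $\|[\mathfrak{A}/b,\mathscr{C}_\Gamma]f_j\|_{L^p(\mathbb{R})}$ bounded away from $0$, contradicting compactness. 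The main obstacle in that self-contained approach is precisely the three-case bookkeeping and the uniform lower bound in the converse, which the reduction to \cite{LNWW} sidesteps entirely.
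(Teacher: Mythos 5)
Your proof is correct and follows essentially the same route as the paper: both rest on the identity $[\mathfrak{A}/b,\mathscr{C}_\Gamma](f)=[\mathfrak{A}/b,\widetilde{\mathscr{C}}_\Gamma](bf)$ and then invoke the compactness characterization for $[\mathfrak{A}/b,\widetilde{\mathscr{C}}_\Gamma]$ from \cite[Theorem 1.2]{LNWW}. The only stylistic difference is that you package the transfer as $[\mathfrak{A}/b,\mathscr{C}_\Gamma]=[\mathfrak{A}/b,\widetilde{\mathscr{C}}_\Gamma]\circ M_b$ with $M_b$ a bounded isomorphism of $L^p$, whereas the paper verifies the equivalence by hand by showing that bounded sets pushed through $M_b$ (respectively $M_{1/b}$) have precompact images; the two formulations are interchangeable.
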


Note that it is possible to deduce these three theorems as corollaries from the results in \cite{LNWW} and \cite{LW} directly,
as we will show in Section 3.

The paper is organized as follows.   In Section \ref{s:2} we collect the necessary preliminaries needed to explain the result. In Section \ref{s:classical-to-Meyer-spaces} we provide a connection between the classical Hardy and ${\rm BMO}$ spaces and the spaces introduced by Meyer.  In Section \ref{s:factorization} we provide a second proof of Theorem \ref{t:UchiyamaFactor} using a clever construction due to Uchiyama \cite{U}.

We use the standard notation that $A\lesssim B$ to mean that there exists an absolute constant $C$ such that $A\leq CB$; $A\gtrsim B$ has the analogous definition.  Finally, $A\approx B$ if $A\lesssim B$ and $B\lesssim A$.
We  use $\langle f,g\rangle_{L^2(\mathbb{R})}$ to denote the  $L^2$-pairing $\int_{\mathbb{R}} f(x)\,g(x)\, dx$. We denote   $C_c^{\infty}(\mathbb{R})$ the space of compactly supported infinitely differentiable functions on $\mathbb{R}$. We denote by $\chi_I$ the characteristic function of the set $I \subset \mathbb{R}$, defined by $\chi_I(x)=1$ if $x\in I$ and $\chi_I(x)=0$ otherwise.

\section{Preliminaries}
\setcounter{equation}{0}
\label{s:2}
In this section we introduce basic notions of accretive functions;  the classical spaces: Hardy space $H^1(\mathbb{R})$, the space of bounded mean oscillation functions ${\rm BMO}(\mathbb{R})$, and the space of vanishing mean oscillation ${\rm VMO}(\mathbb{R})$; and their counterparts,  the Meyer-type Hardy spaces:  $H^1_b(\mathbb{R})$, ${\rm BMO}_b(\mathbb{R})$, and ${\rm VMO}_b(\mathbb{R})$, for $b$ an accretive function. We also introduce the Cauchy integral operator $ \mathscr{C}_{\Gamma}$ associated to a Lipschitz curve $\Gamma$ and the related Cauchy integral operator $\widetilde{\mathscr{C}}_{\Gamma}$.

 A function   $b:\mathbb{R}\to\mathbb{C}$ is \emph{accretive} if  $b\in L^{\infty}(\mathbb{R})$ and 
${\rm Re}\,b(x)\geq \delta >0$ for all $x\in\mathbb{R}$.

     A locally integrable real-valued function $f: \mathbb{R}
    \rightarrow \mathbb{R}$ is said to be of \emph{bounded mean
    oscillation}, written $f \in \bmo$ or $f \in \bmo(\mathbb{R})$, if   
    \[
        \|f\|_{\bmo}
        := \sup_I\frac{1}{|I|}\int_I |f(x) - f_I| \,dx < \infty.
    \]
    Here the supremum is taken over all intervals
    $I$  in $\mathbb{R}$ and 
   $
        f_I
        := \frac{1}{|I|}\int_I f(y) \,dy
    $
    is the average of the function~$f$ over the interval~$I$.

 A ${\rm BMO}$ function $f: \mathbb{R}
    \rightarrow \mathbb{R}$ is said to be of \emph{vanishing mean
    oscillation}, written $f \in {\rm VMO}$ or $f \in {\rm VMO}(\mathbb{R})$,
    if the following three behaviors occur for small, large and far from the origin intervals respectively,
\begin{itemize}
\item[(i)]  $\quad\quad\;\displaystyle{\lim_{\delta\to 0} \sup_{I:|I|<\delta} \frac{1}{|I|}\int_I |f(x) - f_I| \,dx =0,  }$
\item[(ii)]  $\quad\quad\;\displaystyle{\lim_{R\to \infty} \sup_{I:|I| >R} \frac{1}{|I|}\int_I |f(x) - f_I| \,dx =0, \quad}$ and
\item[(iii)]   $\quad\quad\;\displaystyle{\lim_{R\to \infty} \sup_{I: I\cap I(0,R)=\emptyset} \frac{1}{|I|}\int_I |f(x) - f_I| \,dx =0. }$
\end{itemize}

The \emph{Hardy  space} $H^1(\mathbb{R})$ consists of those integrable functions  $f\, :\, \mathbb{R}\to \mathbb{\R}$ that admit an atomic decomposition $f(x)=\sum_{j=1}^{\infty}\lambda_ja_j(x)$ where $\sum_{j=1}^{\infty} |\lambda_j|<\infty$ and the functions $a_j(x)$ are $L^\infty$-\emph{atoms} (respectively  \emph{$L^2$-atoms}) in the sense that: each atom $a_j$ is supported on an interval $I_j$ and the following $L^{\infty}$-size condition
$\|a_j\|_{L^{\infty}(\mathbb{R})} \leq 1/|I_j|$
(respectively, $L^2$-size condition $\|a_j\|_{L^2(\mathbb{R} )}\leq C|I_j|^{-1/2}$) and  cancellation condition $\int_{\mathbb{R}} a_j(x)\, b(x)\,dx =0$ hold. 
The $H^1$-norm can be defined using either type of atoms, for example
\[\|f\|_{H^1(\mathbb{R})}:= \inf\Big \{\sum_{j=1}^{\infty}|\lambda_j|\, : \, f=\sum_{j=1}^{\infty}\lambda_ja_j, \; \mbox{$a_j$ are $L^2$-atoms}\Big \}.\]
If instead we use $L^{\infty}$-atoms we will get an equivalent norm \cite[Section 6.6.4]{Gra}.
It is well known that ${\rm BMO}$ is the dual of $H^1(\mathbb{R})$ \cite{FS}.

 A function $f:\mathbb{R}\to \mathbb{C}$ is said to be in $H^1_b(\mathbb{R})$, the \emph{Meyer-type Hardy space associated with the accretive function $b$} if and only if 
 $$ bf\in H^1(\mathbb{R}), \quad \mbox{moreover}\;  \|f\|_{H^1_b(\mathbb{R})} := \|bf\|_{H^1(\mathbb{R})}.$$
 In other words, the function $f\in H^1_b(\mathbb{R})$ admits an atomic decomposition $f(x)=\sum_{j=1}^{\infty}\lambda_ja_j(x)$ where $\sum_{j=1}^{\infty} |\lambda_j|<\infty$ and the functions $a_j(x)$ are $L^\infty$-\emph{atoms} (respectively $L^2$-atoms) in the sense that: each $H^1_b$ atom $a_j$ is supported on an interval $I_j$ and the following $L^{\infty}$-size condition
$\|a_j\|_{L^{\infty}(\mathbb{R})} \leq 1/|I_j|$
(respectively, $L^2$-size condition $\|a_j\|_{L^2(\mathbb{R} )}\approx \|ba_j\|_{L^2(\mathbb{R} )}\leq C|I_j|^{-1/2}$), and  cancellation condition  $\int_{\mathbb{R}} a_j(x)\, b(x)\,dx =0$ hold.

    A locally integrable function $\frak A: \mathbb{R}
    \rightarrow \mathbb{C}$ is said to be in  $\bmo_b(\mathbb{R})$, the \emph{Meyer-type ${\rm BMO}$ space associated with the accretive function $b$} ,
    if
    \[
       {\frak A / b}\in \bmo(\mathbb{R}),
    \]
    and we define its norm naturally to be 
    $
       \|\frak A\|_{\bmo_b(\R)}:= \left\|{\frak A / b}\right\|_{\bmo(\R)}.
    $
As a consequence of the $H^1$-${\rm BMO}$ duality ${\rm BMO}_b(\mathbb{R})$ is the dual of $H^1_b(\mathbb{R})$ \cite{Me}.

A locally integrable function $\frak A: \mathbb{R}
    \rightarrow \mathbb{C}$ is said to be in  ${\rm VMO}_b(\mathbb{R})$, the \emph{Meyer-type ${\rm VMO}$ space associated with the accretive function $b$} ,
    if
    \[
       {\frak A / b}\in {\rm VMO}(\mathbb{R}).
    \]
Suppose $\Gamma$ is a curve in the complex plane~$\C$ and~$f$
is a function defined on the curve~$\Gamma$. The \emph{Cauchy
integral} of~$f$ is the operator $\mathcal{C}_{\Gamma}$ defined on the complex plane  for $z\notin\Gamma$
by
\begin{equation}\label{eq.11.0}
      \mathcal{C}_{\Gamma}(f)(z)
    := \frac{1}{2\pi i} \int_{\Gamma}\frac{f(\zeta)}{z - \zeta}\,d\zeta.
\end{equation}
A curve~$\Gamma$ is said to be a \emph{Lipschitz curve} if it
can be written in the form~$\Gamma = \{x + iA(x): x \in \R\}$
where~$A: \R \rightarrow \R$ satisfies a Lipschitz condition
\begin{equation}\label{eq:l1}
  |A(x_1) - A(x_2)| \leq L|x_1 - x_2| \quad \text{for all } x_1, x_2 \in \R.
\end{equation}
The best constant~$L$ in~\eqref{eq:l1} is referred to as the
\emph{Lipschitz constant} of~$\Gamma$ or of~$A(x)$. One can
show that $A$~satisfies a Lipschitz condition if and only
if~$A$ is differentiable almost everywhere on~$\R$ and~$A'\in
L^\infty(\mathbb R)$. The Lipschitz constant is~$L =
\|A'\|_{\infty}$.

The \emph{Cauchy integral associated with the Lipschitz
curve}~$\Gamma$ is the singular integral operator $\mathscr{C}_{\Gamma}$  defined for $x\in\mathbb{R}$ and acting on functions $f\in C_c^{\infty}(\mathbb{R})$ by
\begin{equation}\label{eq:l2}
  \mathscr{C}_{\Gamma}(f)(x)
  := {\rm p.v.} \frac{1}{\pi i}\int_{\R} \frac{(1 + iA'(y))f(y)}{y-x + i(A(y) - A(x))}\,dy,
\end{equation}
where $f \in C_{c}^{\infty}(\R)$. The kernel
of~$\mathscr{C}_{\Gamma}$ is given by
\[{C}_{\Gamma}(x,y) =  \frac{1}{\pi i}\frac{1 + iA'(y)}{y-x + i(A(y) - A(x))}.\]
Note that this is not a standard Calder\'on--Zygmund kernel because the function~$1 + iA'$
does not necessarily possess any smoothness. As noted
in~\cite[p.289]{Gra}, 
 the~$L^p$-boundedness
of~$\mathscr{C}_{\Gamma}$ is equivalent to that of the
related operator $\widetilde{\mathscr{C}}_{\Gamma}$ defined for $x\in\mathbb{R}$ by
\begin{equation}\label{eq:l3}
  \widetilde{\mathscr{C}}_{\Gamma}(f)(x)
  := {\rm p.v.} \frac{1}{\pi i}\int_{\R} \frac{f(y)}{y-x + i(A(y) - A(x))}\,dy.
\end{equation}
Moreover,  the kernel of $\widetilde{\mathscr{C}}_{\Gamma}$ is given by
\begin{equation}\label{eq:Ckernel}
    \widetilde{{C}}_\Gamma(x,y)
    = \frac{1}{\pi i}\frac{1}{y-x + i(A(y) - A(x))}.
\end{equation}
The kernel $C_{\Gamma}(x,y)$ of~$\widetilde{\mathscr{C}}_{\Gamma}$ satisfies standard  size and smoothness\footnote{Namely: (size) $|C_{\Gamma}(x,y)|\lesssim 1/|x-y|$ for all $x,y\in\mathbb{R}$  and (smoothness)  $|C_{\Gamma}(x,y)-C_{\Gamma}(x_0,y) | + |C_{\Gamma}(y,x)-C_{\Gamma}(y,x_0) |\lesssim |x-x_0|/|x-y|^2$ for all $x,x_0,y\in\mathbb{R}$ such that $|x-x_0|\leq |y-x|/2$.} estimates \cite[Lemma 3.3]{LNWW} and is therefore bounded on $L^p(\mathbb{R})$ for $p\in (1,\infty)$. Therefore,
while the operator~$\mathscr{C}_{\Gamma}(f)$  is initially defined for~$f \in
C_{c}^{\infty}(\R)$,  it can be extended
to all~$f \in L^{p}(\R)$, for each~$p \in (1,\infty)$. 

An operator $T$ defined on $L^p(\R)$  is \emph{compact} on $L^p(\R)$ if $T$ maps bounded subsets of $L^p(\R)$ into precompact sets. In other words, for all bounded sets $E\subset L^p(\R)$, $T (E)$ is precompact. A set S is \emph{precompact} if its closure is compact.

\section{From Classical Spaces to  Meyer Hardy spaces }
\setcounter{equation}{0}
\label{s:classical-to-Meyer-spaces}

In this section we take advantage of the known weak factorization result for $H^1(\mathbb{R})$ in terms of the Calder\'on-Zygmund singular integral operator $\widetilde{\mathscr{C}}_{\Gamma}$ as well as the characterization of ${\rm BMO}(\mathbb{R})$ via the boundedness of the commutator with $\widetilde{\mathscr{C}}_{\Gamma}$ and of ${\rm VMO}(\mathbb{R})$ via the compactness of the same commutator \cite{LNWW}.

We first consider the adjoint operator $\mathscr{C}_{\Gamma}^*(g)$.
By a direct calculation, we can verify that for $f,g\in L^2(\R)$,
\begin{align*}
\langle  \mathscr{C}_{\Gamma}(f), g\rangle_{L^2(\mathbb{R})} &=\int_{\R }  {\rm p.v.} \frac{1}{\pi i}\int_{\R} \frac{(1 + iA'(y))f(y)}{y-x + i(A(y) - A(x))}\,dy\ g(x)dx\\
&= \int_{\R }  {\rm p.v.} \frac{1}{\pi i}\int_{\R} \frac{1}{y-x + i(A(y) - A(x))} g(x)\, dx\   (1 + iA'(y))f(y) \,dy\\
&= \int_{\R }   b(y)\,(\widetilde{\mathscr{C}}_{\Gamma})^*( g)(y) \,  f(y) \,dy\, = \, \langle f, \mathscr{C}_{\Gamma}^*(g)\rangle_{L^2(\mathbb{R})}.
\end{align*}
We therefore conclude that
\begin{equation}\label{adjoint-C_Gamma}
\mathscr{C}_{\Gamma}^*(g)(x) = b(x)\cdot {(\widetilde{\mathscr{C}}_{\Gamma})^*}( g)(x).  
\end{equation}
Note that $(\widetilde{\mathscr{C}}_{\Gamma})^*=-\widetilde{\mathscr{C}}_{\Gamma}$.

We now use the weak factorization for $H^1(\mathbb{R})$  --valid for  $m$-linear Calder\'on-Zygmund operators \cite[Theorem 1.3]{LW}-- for the particular Calder\'on-Zygmund operator $\widetilde{\mathscr{C}}_{\Gamma}$ \cite{LNWW}, to obtain the desired weak factorization for the Meyer-type Hardy space $H_b^1(\mathbb{R})$. 
\begin{proof}[First Proof of Theorem~\ref{t:UchiyamaFactor}] The function $f\in H^1_b(\mathbb{R})$ if and only if $bf\in H^1(\mathbb{R})$ but by weak factorization of $H^1(\mathbb{R})$ there are a sequence $\{\lambda_{s,k}\}_{s,k\in\mathbb{Z}}$ and compactly supported bounded functions $G^k_s$ and $H^k_s$ such that $b\, f=\sum_{k=1}^{\infty}\sum_{s=1}^{\infty} \lambda_{s,k}\, \Pi(G^k_s,H^k_s)$. 
Where the bilinear form $\Pi(G,H)$ is defined by
\[ \Pi(G,H)(x) = G(x)\cdot  \widetilde{\mathscr{C}}_{\Gamma}(H)(x)-H(x)\cdot (\widetilde{\mathscr{C}}_{\Gamma})^*(G)(x).\]
Moreover, 
$$\|b\,f\|_{H^1(\mathbb{R})} \approx\inf \Big \{ \sum_{k=1}^{\infty}\sum_{s=1}^{\infty} |\lambda_{s,k}| \|G^k_s\|_{L^2(\mathbb{R})}\|H^k_s\|_{L^2(\mathbb{R})}: b\, f=\sum_{k=1}^{\infty}\sum_{s=1}^{\infty} \lambda_{s,k}\, \Pi(G^k_s,H^k_s)\Big \}.$$
Therefore
$$ f (x) = \sum_{k=1}^{\infty}\sum_{s=1}^{\infty} \lambda_{s,k} \, \frac{1}{b}\, \Pi(G^k_s,H^k_s) (x)
  =  \sum_{k=1}^{\infty}\sum_{s=1}^{\infty} \lambda_{s,k}\,  \Pi_b\big (G^k_s,\frac{H^k_s}{b}\big )(x).
$$
The last identity since by definition \eqref{bilinear-form} of the bilinear form $\Pi_b(g,h)$, the fact that $\mathscr{C}_{\Gamma}(f)=\widetilde{\mathscr{C}}_{\Gamma}(bf)$, and identity~\eqref{adjoint-C_Gamma}, we have  that
\begin{eqnarray}\label{Pib-Pi}
\frac{1}{b}\,\Pi(G,H)(x) 
                            & = & \Pi_b \big (G, \frac{H}{b}\big )(x) .
 \end{eqnarray}
 Let $g^k_s:=G^k_s$ and $h^k_s:=H^k_s/b$, both are compactly supported bounded functions and
 $$f(x)  =  \sum_{k=1}^{\infty}\sum_{s=1}^{\infty} \lambda_{s,k}\, \Pi_b\big (g^k_s, h^k_s\big )(x).$$
 Moreover $\|f\|_{H^1_b(\mathbb{R})}= \|b\,f\|_{H^1(\mathbb{R})} $, therefore
 $$\|f\|_{H^1_b(\mathbb{R})} \approx\inf \Big \{ \sum_{k=1}^{\infty}\sum_{s=1}^{\infty} |\lambda_{s,k}| \|g^k_s\|_{L^2(\mathbb{R})}\|h^k_s\|_{L^2(\mathbb{R})}\, : \, f=\sum_{k=1}^{\infty}\sum_{s=1}^{\infty} \lambda_{s,k}\, \Pi_b(g^k_s,h^k_s)\Big \}.$$
The last identity because $g^k_s=G^k_s$ and $\|h^k_s\|_{L^2(\mathbb{R})} \approx \|b\, h^k_s\|_{L^2(\mathbb{R})} = \|H^k_s\|_{L^2(\mathbb{R})} $ since $b$ is an accretive function.
 This proves Theorem~\ref{t:UchiyamaFactor}.
  \end{proof}

If we know how to construct the functions $G^k_s$ and $H^k_s$ then we know how to construct 
 the functions $g^k_s$ and $h^k_s$, and viceversa. In the next section we provide an explicit construction of the functions $g^k_s$ and $h^k_s$,
 following Uchiyama's blueprint directly in our setting.
 
 Before proceeding, we provide proofs of Theorem~{\rm\ref{th upper}} and of Theorem~\ref{compact} relying on the corresponding results for the related Cauchy integral operator $\widetilde{\mathscr{C}}_{\Gamma}$. Namely, $a\in {\rm BMO}$  (respectively in ${\rm VMO}$) if and only if $[a,\widetilde{\mathscr{C}}_{\Gamma}]$ is bounded on $L^p(\mathbb{R})$ (respectively, is  compact in $L^p(\mathbb{R})$) for $p\in (1,\infty)$. Furthermore,  the following norm comparability holds 
 $$\|a\|_{{\rm BMO}} \approx \| [a,\widetilde{\mathscr{C}}_{\Gamma}]\|_{ L^p(\mathbb{R})\to L^p(\mathbb{R})}.$$
 
\begin{proof}[Proof of Theorem~{\ref{th upper}}]
For $b(x)=1+ i A'(x)$, suppose $\frak A$ is in $\bmo_b(\R)$, that is $\frak{A}/b\in \bmo(\mathbb{R})$; a direct calculation, using that $\mathscr{C}_{\Gamma}(g)=\widetilde{\mathscr{C}}_{\Gamma}(bg)$,  shows that
\begin{align*}
\left[ {\frak A/ b}, \mathscr{C}_\Gamma \right](f)(x)
&=\left[ {\frak A/ b}, \widetilde{\mathscr{C}}_\Gamma \right](bf)(x).  
\end{align*}
Thus, since by \cite[Theorem 1.1]{LNWW} the commutator $[\frak A/b,\widetilde{\mathscr{C}}_{\Gamma}]$ is bounded on $L^p(\mathbb{R})$, we get 
\begin{align*}\left\|\left[ {\frak A/ b}, \mathscr{C}_\Gamma \right](f)\right\|_{L^p(\R)}&=\| [ {\frak A/ b}, \widetilde {\mathscr{C}}_\Gamma ](bf) \|_{L^p(\R)}\leq \| [ {\frak A/ b}, \widetilde{\mathscr{C}}_\Gamma ]\|_{L^p(\R)\to L^p(\R)}\|bf\|_{L^p(\mathbb{R})}\\
&\lesssim \left \| \frak A/b\right \|_{\bmo(\mathbb{R})} \|f\|_{L^p(\mathbb{R})} \, = \, C\left \| \frak A\right \|_{\bmo_b(\mathbb{R})} \|f\|_{L^p(\mathbb{R})}.
\end{align*}

Conversely, for any given complex function $\frak A$ such that ${\frak A /b}$ is a real-valued function, ${\frak A / b}\in L^1_{loc}(\R)$ and $\left\|\left[{\frak A / b}, \mathscr{C}_{\Gamma}\right] \right\|_{ L^p(\mathbb{R})\to L^p(\mathbb{R})}<\infty$, we see that
\begin{align*}
\big\|\big[ {\frak A/ b}, \widetilde{\mathscr{C}}_\Gamma \big](f)\big\|_{L^p(\R)} &= \left\|\left[ {\frak A/ b}, \mathscr{C}_\Gamma \right]\left( {f/ b}\right)\right\|_{L^p(\R)}\leq \left\|\left[{\frak A / b}, \mathscr{C}_{\Gamma}\right]\right\|_{L^p(\mathbb{R})\to L^p(\mathbb{R})} \left\|{f/b}\right\|_{L^p(\R)}\\
&\lesssim \left\|\left[{\frak A / b}, \mathscr{C}_{\Gamma}\right] \right\|_{L^p(\mathbb{R})\to L^p(\mathbb{R})} \left\|{f}\right\|_{L^p(\R)}.
\end{align*} 
Hence, the commutator $\big [ {\frak A/ b}, \widetilde{\mathscr{C}}_\Gamma  \big ] $ is bounded on $L^p(\mathbb{R})$ and by \cite[Theorem 1.1]{LNWW} we conclude that
 ${\frak A/ b}$ is in $\bmo(\R)$ and 
$$ \left\|{\frak A/ b}\right\|_{\bmo(\R)}\lesssim  \big\|\big[ {\frak A/b}, \widetilde{\mathscr{C}}_\Gamma \big]\big \|_{L^p(\mathbb{R})\to L^p(\mathbb{R})} \lesssim \left\|\left[{\frak A / b}, \mathscr{C}_{\Gamma}\right]\right \|_{L^p(\mathbb{R})\to L^p(\mathbb{R})}.$$
Hence, we conclude that  ${\frak A}$ is in $\bmo_b(\R)$ and 
$$ \left\|{\frak A}\right\|_{\bmo_b(\R)}\lesssim   \left\|\left[{\frak A /b}, \mathscr{C}_{\Gamma}\right]\right\|_{L^p(\mathbb{R})\to L^p(\mathbb{R})}.$$
This finishes the proof of Theorem~\ref{th upper}.
\end{proof}

Similar considerations yield the proof of Theorem~\ref{compact} from the knowledge that $a\in{\rm VMO}$ if and only if $[a,\widetilde{\mathscr{C}}_{\Gamma}]$ is a compact operator on $L^p(\mathbb{R})\to L^p(\mathbb{R})$ \cite[Theorem 1.2]{LNWW}.
\begin{proof}[Proof Theorem~\ref{compact}]
For $b(x)=1+ i A'(x)\in L^{\infty}(\R)$, suppose $\frak A$ is in ${\rm VMO}_b(\R)$, that is $\frak{A}/b\in {\rm VMO}(\mathbb{R})$. 
Therefore by \cite[Theorem 1.2]{LNWW} the commutator $[\frak{A}/b, \widetilde{\mathscr{C}}_{\Gamma}]$ is compact.
Let $E$ be a bounded subset of $L^p(\R)$, then  $bE$ is  a bounded subset of $L^p(\R )$ since
\[ \sup_{g\in bE} \|g\|_{L^p(\R)}=\sup_{f\in E} \| bf\|_{L^p(\R)} \leq \|b\|_{L^{\infty}}\sup_{f\in E} \| f\|_{L^p(\R)} < \infty.\] 
Therefore $[\frak{A}/b, \widetilde{\mathscr{C}}_{\Gamma}] (bE) $ is a precompact set. 
 Recall that 
$\left[ {\frak A/ b}, \widetilde{\mathscr{C}}_\Gamma \right](bf)(x) = \left[ {\frak A/ b}, \mathscr{C}_\Gamma \right](f)(x)$ for all $f\in L^p(\R)$.
Thus  
$$[\frak{A}/b,\mathscr{C}_{\Gamma}](E) = [\frak{A}/b, \widetilde{\mathscr{C}}_{\Gamma}] (bE).$$
Hence $[\frak{A}/b,\mathscr{C}_{\Gamma}](E) $ is a precompact set for all given bounded subsets $E$ of $L^p(\R)$. By definition $[\frak{A}/b,\mathscr{C}_{\Gamma}]$ is compact.

Conversely, suppose  $[\frak{A}/b,\mathscr{C}_{\Gamma}]$ is compact. Then given a bounded subset $F$ of $L^p(\R)$, $F/b$ is also a bounded subset of $L^p(\R)$ since $\|b\|_{L^{\infty}}\geq 1$. Therefore  $[\frak{A}/b,\mathscr{C}_{\Gamma}] (F/b)$ is precompact, but as before, 
$$[\frak{A}/b,\mathscr{C}_{\Gamma}] (F/b) = [\frak{A}/b,\widetilde{\mathscr{C}}_{\Gamma}] (F).$$
Thus $[\frak{A}/b,\widetilde{\mathscr{C}}_{\Gamma}] (F)$ is a precompact set for all bounded subsets $F$ of $L^p(\R)$.
By definition $[\frak{A}/b,\widetilde{\mathscr{C}}_{\Gamma}]$ is  a compact operator in $L^p(\R )$  and by \cite[Theorem 1.2]{LNWW} we conclude that $\frak{A}/b \in {\rm VMO}(\mathbb{R})$, and therefore $\frak{A}\in {\rm VMO}_b(\mathbb{R})$. This finishes the proof of Theorem~\ref{compact}.
\end{proof}

\section{Weak Factorization of the Meyer Hardy space - Uchiyama's construction}
\setcounter{equation}{0}
\label{s:factorization}

In this section we present a constructive proof of  functions $g^k_s$ and $h^k_s$ for $k,s\geq 1$, appearing  in the weak factorization of $H^1_b(\mathbb{R})$. This argument follows Uchiyama's procedure closely \cite{U}.

\subsection{The upper bound in Theorem~\ref{t:UchiyamaFactor}}

Given a function $f\in H^1_b(\mathbb{R})$, suppose we have a factorization of the form $f =  \sum_{k=1}^{\infty}\sum_{s=1}^{\infty} \lambda_{s,k}\, \Pi_b\big (g^k_s, h^k_s\big )$ with  $\{\lambda_{s,k}\}\in \ell^1$ and $g^k_s$ and $h^k_s$ compactly supported and bounded functions, as claimed in Theorem~\ref{t:UchiyamaFactor}. Then it suffices to verify the following Lemma~\ref{t:upperbound} to conclude that
\[ \|f\|_{H_b^1(\mathbb{R})} \leq \sum_{k=1}^{\infty}\sum_{s=1}^{\infty} |\lambda_{s,k}| \|g^k_s\|_{L^2(\mathbb{R})}\|h^k_s\|_{L^2(\mathbb{R})}.\]
 \begin{lemma}
\label{t:upperbound}
Let $g,h\in L^\infty(\mathbb{R})$ with compact supports.  Then $\Pi_b(g,h)$ is in $H^1_b(\R)$ with
$$
\left\Vert \Pi_b(g,h)\right\Vert_{H^1_{b}(\mathbb{R})}\lesssim  \left\Vert g\right\Vert_{L^2(\mathbb{R})}\left\Vert h\right\Vert_{L^2(\mathbb{R})}.
$$
\end{lemma}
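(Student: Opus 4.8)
The plan is to reduce the claim, via the identity $\Pi_b(g,h) = \tfrac{1}{b}\Pi(g,bh)$ from \eqref{Pib-Pi} and the definition $\|\cdot\|_{H^1_b} = \|b\,\cdot\|_{H^1}$, to showing that $\Pi(G,H) = G\cdot \widetilde{\mathscr{C}}_{\Gamma}(H) - H\cdot (\widetilde{\mathscr{C}}_{\Gamma})^*(G)$ lies in $H^1(\mathbb{R})$ with $\|\Pi(G,H)\|_{H^1(\mathbb{R})} \lesssim \|G\|_{L^2}\|H\|_{L^2}$, where $G = g$, $H = bh$, so that $\|G\|_{L^2}\|H\|_{L^2} \approx \|g\|_{L^2}\|h\|_{L^2}$ since $b$ is accretive. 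This is precisely the upper bound half of the weak factorization of $H^1(\mathbb{R})$ for the Calder\'on--Zygmund operator $\widetilde{\mathscr{C}}_{\Gamma}$; since the kernel $\widetilde{C}_\Gamma(x,y)$ satisfies standard size and smoothness estimates (the footnote after \eqref{eq:Ckernel}, from \cite[Lemma 3.3]{LNWW}) and $\widetilde{\mathscr{C}}_\Gamma$ is $L^p$-bounded, this falls under \cite[Theorem 1.3]{LW}. So one option is simply to quote \cite[Theorem 1.3]{LW}; but since the point of this section is a self-contained treatment following Uchiyama, I would instead give the direct argument.

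First I would check the cancellation: using the duality pairing, for any $\phi$,
$$\int_{\mathbb{R}} \Pi(G,H)(x)\,\phi(x)\,dx = \langle G\,\widetilde{\mathscr{C}}_\Gamma(H),\phi\rangle - \langle H\,(\widetilde{\mathscr{C}}_\Gamma)^*(G),\phi\rangle = \langle \widetilde{\mathscr{C}}_\Gamma(H), \overline{G}\phi\rangle - \langle (\widetilde{\mathscr{C}}_\Gamma)^*(G), \overline{H}\phi\rangle,$$
and taking $\phi \equiv 1$ (formally, or with a suitable limiting argument on compactly supported $G,H$) the two terms cancel because $\langle \widetilde{\mathscr{C}}_\Gamma H, G\rangle = \langle H, (\widetilde{\mathscr{C}}_\Gamma)^* G\rangle$; this gives $\int_{\mathbb{R}} b(x)\Pi_b(g,h)(x)\,dx = \int_{\mathbb{R}}\Pi(G,H)(x)\,dx = 0$, which is the mean-zero condition needed for $H^1$. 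Then I would produce the atomic decomposition of $\Pi(G,H)$ by a stopping-time / Calder\'on--Zygmund decomposition of the product at dyadic scales: decompose $G = \sum_s G_s$, $H = \sum_s H_s$ adapted to dyadic cubes, write $\Pi(G,H) = \sum$ of local pieces, where for each cube $Q$ one uses the far-field smoothness of the kernel to show that the contribution supported near $Q$ behaves like a constant multiple of an $L^2$-atom on (a dilate of) $Q$, with the constant controlled by $\|G\chi_Q\|_{L^2}\|H\chi_{cQ}\|_{L^2}$ summing to $\lesssim \|G\|_{L^2}\|H\|_{L^2}$ by Cauchy--Schwarz. This is exactly Uchiyama's construction, which the paper says it will follow; the identification of the pieces with $L^2$-atoms (bounded $L^2$-norm times $|Q|^{-1/2}$, supported on a fixed dilate of $Q$, mean zero) uses the size estimate for the near terms and the smoothness estimate to handle the tails.

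The main obstacle is organizing the double sum over scales $k$ and over cubes $s$ at each scale so that the resulting coefficients $\{\lambda_{s,k}\}$ are summable with $\sum_{k,s}|\lambda_{s,k}| \lesssim \|G\|_{L^2}\|H\|_{L^2}$: one must carefully track which products $G\cdot\widetilde{\mathscr{C}}_\Gamma(H)$ across different scales contribute to the atom on a given cube, and use the smoothness of the kernel to get geometric decay in the scale separation so that the Schur-type / Cauchy--Schwarz bookkeeping closes. Since this section deliberately mirrors \cite{U}, I would carry out the construction in the same notation and only highlight the two places where the Cauchy kernel's nonstandard features enter: (a) we work with $\widetilde{\mathscr{C}}_\Gamma$ rather than $\mathscr{C}_\Gamma$ precisely because the former has a Calder\'on--Zygmund kernel, and (b) the accretivity of $b$ is what lets us pass freely between $\|h\|_{L^2}$ and $\|bh\|_{L^2}$ and between $H^1$-atoms and $H^1_b$-atoms. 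A clean alternative, if a self-contained Uchiyama-style proof is not desired here, is to state that Lemma~\ref{t:upperbound} is an immediate consequence of \eqref{Pib-Pi} together with the $H^1(\mathbb{R})$ weak-factorization upper bound in \cite[Theorem 1.3]{LW} applied to $\widetilde{\mathscr{C}}_\Gamma$, and to defer the explicit construction to the subsequent subsections.
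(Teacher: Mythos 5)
Your reduction to $\Pi(G,H)$ via $\Pi_b(g,h)=\tfrac{1}{b}\Pi(g,bh)$ and the cancellation check are fine (though note the paper's pairing $\langle f,g\rangle_{L^2}=\int fg$ has no conjugates, so the bars in your computation should be dropped), and either of your two routes would work; but neither is the route the paper takes. The paper's argument is much shorter: it observes that $\Pi_b(g,h)$ is compactly supported, in $L^2$, and satisfies $\int\Pi_b(g,h)\,b\,dx=0$, hence is (a constant multiple of) a single $L^2$-atom of $H^1_b$; then it gets the \emph{norm} bound by duality against $\mathrm{BMO}_b$. Concretely, the identity $\langle \frak A,\Pi_b(g,h)\rangle_{L^2}=\langle g,[\frak A/b,\mathscr{C}_\Gamma](h)\rangle_{L^2}$ plus the already-established upper bound of Theorem~\ref{th upper} (commutator boundedness, imported from \cite{LNWW}) gives $|\langle\frak A,\Pi_b(g,h)\rangle|\lesssim\|\frak A\|_{\mathrm{BMO}_b}\|g\|_{L^2}\|h\|_{L^2}$, and the $H^1_b$--$\mathrm{BMO}_b$ duality finishes it in a few lines. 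Your main proposal --- a direct Uchiyama/Calder\'on--Zygmund atomic decomposition of the bilinear form --- would avoid invoking the commutator bound, which is logically appealing (the paper explicitly flags in its closing remark that it uses the upper bound of Theorem~\ref{th upper} to prove this lemma, while warning against the reverse circularity); but as you yourself note, the bookkeeping of the double sum over scales and cubes is nontrivial, and you do not carry it out. Your fallback of reducing to $\Pi(G,H)$ and citing \cite[Theorem 1.3]{LW} is correct and in the spirit of the paper's ``First Proof'' of Theorem~\ref{t:UchiyamaFactor}, but it trades self-containedness for brevity, whereas the section you are writing in is explicitly aimed at a constructive account. In short: your approach is sound but different --- you factor through $\Pi$ and (ideally) build atoms by hand; the paper stays with $\Pi_b$ and dispenses with the norm estimate in one duality step, at the cost of depending on Theorem~\ref{th upper}.
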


\begin{proof}

We first point out that for any $g,h\in L^\infty(\mathbb{R})$ with compact supports,
$\Pi_b(g,h)$ is compactly supported in ${\rm supp} (g)\cup {\rm supp} (h)$. 
Next, it is easy to see  that $\Pi_b(g,h)\in L^2(\mathbb{R})$, using that $\mathscr{C}_{\Gamma}$ is a bounded operator in $L^2(\mathbb{R})$, indeed,
\begin{eqnarray*}
 \|\Pi_b(g,h)\|_{L^2(\R)} & \lesssim  &
 \|g\|_{L^{\infty}(\mathbb{R})}\|h\|_{L^2(\mathbb{R})} +\|h\|_{L^{\infty}(\mathbb{R})}\|g\|_{L^2(\mathbb{R})} 
\end{eqnarray*}
Moreover, since by definition of adjoint $\langle h,  \mathscr{C}_{\Gamma}^*(g)\rangle_{L^2(\mathbb{R})} = \langle \mathscr{C}_{\Gamma}(h), g\rangle_{L^2(\mathbb{R})}$, the following cancellation holds,
$$ \int_{\R}  \Pi_b(g,h)(x)\, b(x)\,dx = \int_{\R}\big(g(x)\cdot \mathscr{C}_{\Gamma}(h)(x)-h(x)\cdot \mathscr{C}_{\Gamma}^*(g)(x)\big) \, dx =0.$$
Hence, it is clear that up to a multiplication by certain constant, the bilinear form
$\Pi_b(g,h)(x)$ is a  $L^2$-atom of $H^1_b(\R)$, that is, 
$\Pi_b(g,h)\in H^1_b(\R)$.

Now it suffices to verify the $H^1_b(\R)$ norm of $\Pi_b(g,h)$ is controlled by an absolute multiple of $\left\Vert g\right\Vert_{L^2(\mathbb{R})}\left\Vert h\right\Vert_{L^2(\mathbb{R})}$. A simple duality computation shows for $\frak A\in {\rm BMO}_{b}(\mathbb{R})$ and for any $g,h\in L^\infty(\mathbb R^n)$ with compact supports:
\begin{eqnarray*}
\left\langle \frak A, \Pi_b(g,h)\right\rangle_{L^2(\mathbb{R})}  =  \left\langle {\frak A/ b}, g\cdot \mathscr{C}_{\Gamma}(h)-h\cdot \mathscr{C}_{\Gamma}^*(g)\right\rangle_{L^2(\mathbb{R})} = \left\langle g,\left[{\frak A/ b}, \mathscr{C}_{\Gamma}\right](h)\right\rangle_{L^2(\mathbb{R})}.
\end{eqnarray*}
Remember that  $\langle f,g\rangle_{L^2(\mathbb{R})}$ denotes the  $L^2$ pairing $\int_{\mathbb{R}} f(x)\,g(x)\, dx$, not the $L^2$ inner product.
Thus,  from the upper bound as in Theorem \ref{th upper}, we obtain that
\begin{eqnarray*}
|\left\langle \frak A, \Pi_b(g,h)\right\rangle_{L^2(\mathbb{R})}| = \left|\left\langle g,\left[{\frak A/ b},\mathscr{C}_{\Gamma}\right](h)\right\rangle_{L^2(\mathbb{R})}\right| \leq C \|\frak A\|_{{\rm BMO}_{b}(\mathbb{R})  }\|g\|_{L^2(\mathbb R)} \|h\|_{L^2(\mathbb R)}.
\end{eqnarray*}
This, together with the duality result of \cite{Me},  $H^1_{b}(\mathbb{R})^{*}={\rm BMO}_{b}(\mathbb{R})$, shows that
\begin{eqnarray*}
\left\Vert \Pi_b(g,h)\right\Vert_{H^1_{b}(\mathbb{R})} & \approx &  \sup_{\left\Vert \frak A\right\Vert_{{\rm BMO}_{b}(\mathbb{R})}\leq 1}  \left\vert \left\langle \frak A,\Pi_b(g,h)\right\rangle_{L^2(\mathbb{R})}\right\vert\\
& \lesssim &  \left\Vert g\right\Vert_{L^2(\mathbb{R})}\left\Vert h\right\Vert_{L^2(\mathbb{R})} \sup_{\left\Vert \frak A\right\Vert_{{\rm BMO}_{b}(\mathbb{R})}\leq 1} \left\Vert \frak A\right\Vert_{{\rm BMO}_{b}(\mathbb{R})}\\
& \lesssim & \left\Vert g\right\Vert_{L^2(\mathbb{R})}\left\Vert h\right\Vert_{L^2(\mathbb{R})}.
\end{eqnarray*}

\end{proof}

\subsection{The factorization and the lower bound in Theorem~\ref{t:UchiyamaFactor}}

The proof of the factorization and of the lower bound  in Theorem~\ref{t:UchiyamaFactor} is more algorithmic in nature and follows a proof strategy pioneered by Uchiyama in \cite{U}.  We begin with a fact that will play a prominent role in the algorithm below.  It is a modification of a related fact for the standard Hardy space $H^1(\mathbb{R})$.

\begin{lemma}\label{lemma Hardy}
Let $b(x)=1+iA'(x)$  with $A'\in L^{\infty}(\mathbb{R})$.
Suppose $f$ is a function satisfying:  $\int_{\mathbb{R}} f(x)\,b(x)\,dx=0$, and $|f(x)|\leq \chi_{I(x_0,1)}(x)+\chi_{I(y_0,1)}(x)$, where $|x_0-y_0|:=M>100$ and $I(z_0,L):=\{z\in\mathbb{R}\, :\, |z-z_0|<L\}$. Then $f\in H^1_b(\mathbb{R})$ and we have
\begin{align}
 \|f\|_{H_{b}^1(\mathbb{R})} \lesssim \log M.
\end{align}
\end{lemma}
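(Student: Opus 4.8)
The plan is to reduce Lemma~\ref{lemma Hardy} to the corresponding statement for the classical Hardy space $H^1(\mathbb{R})$, exploiting the identity $\|f\|_{H^1_b(\mathbb{R})}=\|bf\|_{H^1(\mathbb{R})}$. First I would set $g:=bf$ and record the two facts inherited from the hypotheses: the cancellation $\int_{\mathbb{R}}g(x)\,dx=\int_{\mathbb{R}}f(x)\,b(x)\,dx=0$, and the pointwise bound $|g(x)|=|b(x)||f(x)|\le \|b\|_{L^\infty}\bigl(\chi_{I(x_0,1)}(x)+\chi_{I(y_0,1)}(x)\bigr)$, using $\|b\|_{L^\infty}=\|1+iA'\|_{L^\infty}<\infty$. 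Thus, up to the harmless constant $\|b\|_{L^\infty}$, the function $g$ satisfies exactly the hypotheses of the classical two-bump lemma for $H^1(\mathbb{R})$: it has integral zero and is supported on two unit intervals separated by distance $M$.

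The heart of the matter is then the classical estimate $\|g\|_{H^1(\mathbb{R})}\lesssim \log M$ for such a $g$; once this is in hand, $\|f\|_{H^1_b(\mathbb{R})}=\|bf\|_{H^1(\mathbb{R})}=\|g\|_{H^1(\mathbb{R})}\lesssim\log M$ and we are done. To prove the classical estimate I would split $g=g_1+g_2$ where $g_1:=g\,\chi_{I(x_0,1)}$ and $g_2:=g\,\chi_{I(y_0,1)}$, and note $\int g_1 = -\int g_2=:c$ with $|c|\le \pi\|b\|_{L^\infty}$ (the mass of a single bump). The function $g_1-\frac{c}{|J|}\chi_J$, where $J$ is a sufficiently large interval, is a multiple of an $L^\infty$-atom; the analogous statement holds for $g_2+\frac{c}{|J|}\chi_J$. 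The point is to choose $J$ to be an interval containing both $I(x_0,1)$ and $I(y_0,1)$, so that the two correction terms cancel: $g=\bigl(g_1-\frac{c}{|J|}\chi_J\bigr)+\bigl(g_2+\frac{c}{|J|}\chi_J\bigr)$, and each summand has mean zero and is supported in $J$, with $|J|\approx M$. Estimating the $H^1$ norm of each summand by its $L^1$ norm times a logarithmic factor — more precisely, a mean-zero function supported in an interval $J$ and bounded by $\Lambda$ on a unit interval inside $J$ has $H^1$ norm $\lesssim \Lambda(1+\log|J|)$, which one obtains by a dyadic decomposition of $J$ into $O(\log|J|)$ atoms — gives the bound $\lesssim \log M$.

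Alternatively, and perhaps more cleanly, I would use the dual formulation: $\|g\|_{H^1(\mathbb{R})}\approx \sup\{|\langle \psi,g\rangle| : \|\psi\|_{\mathrm{BMO}}\le 1\}$, and for a $\mathrm{BMO}$ function $\psi$ normalized by subtracting its average over $I(x_0,1)$, the classical John--Nirenberg / logarithmic growth estimate $|\psi_{I(x_0,1)}-\psi_{I(y_0,1)}|\lesssim \log M\,\|\psi\|_{\mathrm{BMO}}$ controls $\langle\psi,g\rangle = \langle \psi-\psi_{I(x_0,1)},g_1\rangle+\langle\psi-\psi_{I(x_0,1)},g_2\rangle$ after using $\int g_1=-\int g_2$; the first pairing is $\lesssim \|\psi\|_{\mathrm{BMO}}$ and the second picks up the $\log M$ from the displacement between the two intervals. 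I would present whichever of these is shorter.

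The main obstacle I anticipate is purely bookkeeping: being careful that the corrections used to enforce the mean-zero condition on each single-bump piece are genuinely absorbed by a single large interval $J$ of length comparable to $M$, and tracking that the number of atoms in the dyadic decomposition of $J$ is $O(\log M)$ rather than $O(M)$. There is no serious analytic difficulty beyond the classical $H^1$ theory already cited in the excerpt (atomic decomposition, $\mathrm{BMO}$ duality, John--Nirenberg); the only new ingredient is the trivial observation that multiplication by the bounded accretive $b$ transports the problem from $H^1_b$ to $H^1$ with constants depending only on $\|b\|_{L^\infty}$.
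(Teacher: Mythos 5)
Your proposal is essentially correct, and it coincides with the alternative route the paper itself records \emph{before} giving its proof: the authors explicitly observe that applying the classical two-bump lemma (\cite[Lemma 2.2]{LW}) to $F_0 = bf/\|b\|_{L^\infty}$ yields $\|f\|_{H^1_b}=\|bf\|_{H^1}\lesssim \|b\|_{L^\infty}\log M$, exactly as you do. They then deliberately avoid this reduction because \cite[Lemma 2.2]{LW} is stated there without proof; instead they give a direct telescoping atomic decomposition in $H^1_b$, using corrections $g_1^i = \bigl(\int f_1 b\bigr)\chi_{I(x_0,2^i)}/\int_{I(x_0,2^i)} b$ over dyadically doubling intervals and producing $i_0\approx \log_2 M$ atoms per bump, with the two tails killed together on a common large interval $\overline{I}$. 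Your atomic-decomposition sketch of the classical case is this same construction specialized to $b\equiv 1$, merely with the order of the two moves reversed (you project onto the common big interval $J$ first, then telescope dyadically; they telescope first, then combine the tails). So in substance the two arguments are the same, and as the paper points out, working through the construction with general accretive $b$ simultaneously supplies a proof of the classical lemma. Your BMO-duality argument via $|\psi_{I(x_0,1)}-\psi_{I(y_0,1)}|\lesssim\log M\,\|\psi\|_{\mathrm{BMO}}$ is a genuinely different and equally valid route that the paper does not use.

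Two small imprecisions worth fixing. First, the sub-lemma you invoke, ``a mean-zero function supported in $J$ and bounded by $\Lambda$ on a unit interval inside $J$ has $H^1$ norm $\lesssim\Lambda(1+\log|J|)$,'' is false without also constraining the function \emph{off} the unit interval; in your application $g_1-\tfrac{c}{|J|}\chi_J$ is bounded by $|c|/|J|\lesssim 1/M$ on $J\setminus I(x_0,1)$, and that smallness is exactly what makes the $O(\log|J|)$ dyadic atoms have $O(1)$ coefficients. State the hypothesis as ``bounded by $\Lambda$ on a unit interval and by $O(\Lambda/|J|)$ elsewhere in $J$.'' Second, $|I(x_0,1)|=2$, so $|c|\le 2\|b\|_{L^\infty}$; your $\pi\|b\|_{L^\infty}$ is a harmless overestimate but has no provenance.
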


The lemma when $b\equiv 1$ is stated in \cite[Lemma 2.2]{LW} without proof, the authors refer the reader to  \cite[Lemma 3.1]{DLWY} and \cite[Lemma 4.3]{LW2} where the corresponding lemma, in the Bessel and Neumann Laplacian settings respectively, is stated and proved. We can not apply directly  \cite[Lemma 2.2]{LW} because although $F=bf$ will satisfy $\int_{\mathbb{R}}F(x)\, dx=0$ by hypothesis,  it will not satisfy that $|F(x)|\leq \chi_{I(x_0,1)}(x)+\chi_{I(y_0,1)}(x)$, instead it will satisfy $|F(x)|\leq |b(x)|\big (\chi_{I(x_0,1)}(x)+\chi_{I(y_0,1)}(x)\big )$.  But we could apply it to 
$F_0=bf/\|b\|_{L^{\infty}(\mathbb{R})}$, since $|F_0(x)|\leq \chi_{I(x_0,1)}(x)+\chi_{I(y_0,1)}(x)$, to conclude that $F_0\in H^1(\mathbb{R})$  and $\|F_0\|_{H^1(\mathbb{R})} \lesssim \log M$. Finally since $\|F_0\|_{H^1(\mathbb{R})}= \|bf\|_{H^1(\mathbb{R})} /\|b\|_{L^{\infty}(\mathbb{R})}$ we conclude that  $f\in H^1_b(\mathbb{R})$ 
and 
$$\|f\|_{H^1_b(\mathbb{R})}=\|bf\|_{H^1(\mathbb{R})}\lesssim \| b\|_{L^{\infty}(\mathbb{R})}\log M\lesssim \log M.$$
Nevertheless, for completeness, we present here a direct construction of an atomic decomposition in $H^1_b(\mathbb{R})$ for $f$ that yields the estimate claimed in Lemma~\ref{lemma Hardy} that could have an interest in itself, it also provides a proof for \cite[Lemma 2.2]{LW} by setting $b\equiv 1$. This construction yields an atomic decomposition for  $f = \sum_{j\in\mathbb{Z}} \lambda_j a_j$.
However the $H^1_b$  $L^{\infty}$-atoms $a_j$ built in the proof of Lemma~\ref{lemma Hardy} for the specific given  $f$ are not the $H^1$ $L^{\infty}$-atoms one would get by multiplying by $\|b\|_{\infty}/b$ the $H^1$ $L^{\infty}$-atoms $A_j$ obtained by the same procedure   applied to $F_0$ when  $b\equiv 1$.

\begin{proof}[Proof of Lemma~\ref{lemma Hardy} ]
Suppose $f$ satisfies the conditions as stated in the lemma above. We will show by  construction that
$f$ has an atomic decomposition with respect to the $H_b^1(\R)$ $L^{\infty}$-atoms, using an idea from Coifman \cite{CW}.   To see this, we first define two functions $f_1(x) $ and $f_2(x)$ by
\[  f_1(x)= f(x) \, \chi_{I(x_0,1)}(x) \quad\quad\mbox{and} \quad\quad f_2(x)= f(x) \, \chi_{I(y_0,1)}(x) .\]
Then we have $f=f_1+f_2$  and by hypothesis and definition
$$ |f_1(x)| \lesssim   \chi_{I(x_0,1)}(x) \quad {\rm and}\quad |f_2(x)| \lesssim   \chi_{I(y_0,1)}(x). $$

Define  
\begin{align*}
g_1^1(x)&:=\frac{\chi_{I(x_0,2)}(x)}{ \int_{I(x_0,2)}b(z)\, dz }\int_{\mathbb{R}}f_1(y)\,b(y) \, dy,\\ 
f_1^1(x)&:= f_1(x)- g_1^1(x),\\
\alpha_1^1&:=\|f_1^1\|_\infty |I(x_0,2)|.
\end{align*}
Then we claim that $a_1^1:= (\alpha_1^1)^{-1} f_1^1$  is an $H^1_b(\R)$ $L^\infty$-atom.
First, by definition  $a_1^1$ is supported on $I(x_0,2)$. Moreover, we have that
\begin{align*}
  \int_{\R} a_1^1(x) \, b(x)\,dx& =(\alpha_1^1)^{-1}\int_{\R} \left(f_1(x)- g_1^1(x) \right) b(x)\,dx\\
  &= (\alpha_1^1)^{-1}\bigg(\int_{\R} f_1(x)\, b(x)\, dx- \int_{\R}\frac{\chi_{I(x_0,2)}(x)}{ \int_{I(x_0,2)}\,b(z)\,dz }\, b(x) \,dx  \int_{\R} f_1(y)\, b(y)\, dy\bigg)\\
  &= (\alpha_1^1)^{-1}\bigg(\int_{\R} f_1(x)\, b(x)\, dx-   \int_{\R} f_1(y)\, b(y)\, dy\bigg)\, = \, 0
\end{align*}
and that
\begin{align*}
\|a_1^1\|_\infty \leq |(\alpha_1^1)^{-1}|\|f_1^1\|_\infty = \frac1{|I(x_0,2)|}.
\end{align*}
Thus, $a_1^1$  is an $H^1_b(\R)$ $L^\infty$-atom. We also have the following  estimate for the coefficient~$\alpha_1^1$.
\begin{align*}
|\alpha_1^1|&=\|f_1^1\|_\infty |I(x_0,2)| \le \|f_1\|_\infty |I(x_0,2)| + \|g_1^1\|_\infty |I(x_0,2)| \\
&\leq  |I(x_0,2)|+ \frac{ |I(x_0,2)|}
{ \big|\int_{I(x_0,2)}b(z)\,dz\big| } \int_{\mathbb{R}}|f_1(y)|\,|b(y)| \, dy\\
& \leq 4+2\|b\|_{L^{\infty}(\mathbb{R})} \, \leq  \, 6\|b\|_{L^{\infty}(\mathbb{R})} \, \lesssim  \,1.
\end{align*}
Here we used the facts that $\|b\|_{L^{\infty}(\mathbb{\R})} < \infty$,   $f_1\leq \chi_{I(x_0,1)}$,  $|I(x_0,L)|=2L$, and 
$$ \Big|\int_{I(x_0,2)}b(z)\,dz\Big|\geq \Big|\int_{I(x_0,2)}{\rm Re}\, b(z)\,dz\Big|\geq |I(x_0,2)|. $$
Moreover, we see that
\begin{align*}
f_1(x)= f_1^1(x)+ g_1^1(x)= \alpha_1^1 a_1^1(x)+ g_1^1(x).
\end{align*}
For $g_1^1(x)$, we further write it as
\begin{align*}
g_1^1(x)= \big (g_1^1(x)- g_1^2(x)\big )+ g_1^2(x)=:f_1^2(x)+g_1^2(x)
\end{align*}
with
$$g_1^2(x):=\frac{\chi_{I(x_0,4)}(x)}{\int_{I(x_0,4)}b(z)\,dz} \int_{\mathbb{R}}f_1(y)\,b(y) \, dy.$$  
Again, we define
\begin{align*}
\alpha_1^2&:=\|f_1^2\|_\infty |I(x_0,4)| \quad{\rm and}\quad a_1^2:= (\alpha_1^2)^{-1} f_1^2,
\end{align*}
and following similar estimates as for $a_1^1$, we see that
$a_1^2$  satisfies the compact support condition and the size condition
$ \|a_1^2\|_\infty \leq \frac{1}{|I(x_0,4)|}$. Hence, it suffice to see that
it also satisfies the cancellation condition with respect to $b$. In fact,
\begin{align*}
  \int_{\R} a_1^2(x)\, b(x)\,dx
  & =(\alpha_1^2)^{-1}\int_{\R} \left( g_1^1(x) -  g_1^2(x) \right) b(x)\,dx\\
  &= (\alpha_1^2)^{-1}\bigg(  \int_{\R}\frac{\chi_{I(x_0,2)}(x)}{ \int_{I(x_0,2)}b(z)\,dz } \,b(x)\,dx  \int_{\R} f_1(y)\,b(y)\,dy \\
  &\hskip 1in -\int_{\R}\frac{\chi_{I(x_0,4)}(x)}{ \int_{I(x_0,4)}b(z)\,dz } \,b(x)\,dx  \int_{\R} f_1(y)\,b(y)\,dy \bigg)\\
  &= (\alpha_1^2)^{-1}\bigg(\int_{\R} f_1(y)\,b(y)\,dy-   \int_{\R} f_1(y)\,b(y)\,dy\bigg)
  =0.
\end{align*}
As 
a consequence,
we see that $a_1^2$
is an $H^1_b(\R)$ $L^\infty$-atom.
Moreover, we have the following estimate for the coefficient $a_1^2$.
\begin{align*}
|\alpha_1^2|&=\|f_1^2\|_\infty |I(x_0,4)| \le \|g_1^1\|_\infty |I(x_0,4)| + \|g_1^2\|_\infty |I(x_0,4)| \\
&\leq   \frac{ | I(x_0,4)|}{ \big|\int_{I(x_0,2)}b(z)\,dz\big| } \int_{\mathbb{R}}|f_1(y)|\,|b(y)| \, dy 
+ \frac{| I(x_0,4)|}{ \big|\int_{I(x_0,4)}b(z)\,dz\big| } \int_{\mathbb{R}}|f_1(y)|\,|b(y)| \, dy \\ 
&  \leq 4\|b\|_{L^{\infty}(\mathbb{R})}+2\|b\|_{L^{\infty}(\mathbb{R})} \, = \, 6\|b\|_{L^{\infty}(\mathbb{R})}\, \lesssim \,1.
\end{align*}
Here again we use the fact that for every $L>0$,
$$ \Big|\int_{I(x_0,L)}b(z)\,dz\Big|\geq \Big|\int_{I(x_0,L)}{\rm Re} \,b(z)\,dz\Big|\geq |I(x_0,L)|. $$

Then we have
\begin{align*}
f_1(x)=\sum_{i=1}^2 \alpha_1^i a_1^i(x)+ g_1^2(x).
\end{align*}
Continuing in this fashion  we see that for $i \in \{1, 2, . . . , i_0\}$,
\begin{align*}
f_1(x)=\sum_{i=1}^{i_0} \alpha_1^i a_1^i(x)+ g_1^{i_0}(x),
\end{align*}
where for $i \in \{2, . . . , i_0\}$,
\begin{align*}
g_1^{i}(x)&:=\frac{\chi_{I(x_0,2^i)}(x)}{\int_{I(x_0,2^i)}b(z)dz} \int_{\mathbb{R}}f_1(y)\, b(y)\, dy,\\    
f_1^{i}(x)&:= g_1^{i-1}(x)-g_1^{i}(x),\\
\alpha_1^i&:=\|f_1^i\|_\infty | I(x_0,2^i)| \quad{\rm and}\\
a_1^i(x)&:= (\alpha_1^i)^{-1} f_1^i(x).
\end{align*}
Here we choose $i_0$ to be the smallest positive integer such that $ I(y_0,1)\subset I(x_0,2^{i_0})$. Then from the condition that $|x_0-y_0|=M$, we obtain that
$$ i_0\approx \log_2 M. $$

Moreover, for $i \in \{1, 2, . . . , i_0\}$, we have the estimate of the coefficients as follows.
\begin{align*}
|\alpha_1^i|& \leq  6\|b\|_{L^{\infty}(\mathbb{R})} \, \lesssim  \,1.
\end{align*}

Following the same steps, we also obtain that  for $i \in \{1, 2, . . . , i_0\}$,
\begin{align*}
f_2(x)=\sum_{i=1}^{i_0} \alpha_2^i a_2^i(x)+ g_2^{i_0}(x),
\end{align*}
where for $i \in \{2, . . . , i_0\}$,
\begin{align*}
g_2^{i}(x)&:={\chi_{I(y_0,2^i)}(x)\over \int_{I(y_0,2^i)}b(z)\, dz } \int_{\mathbb{R}}f_2(y)\,b(y) \, dy,\\   
f_2^{i}(x)&:= g_2^{i-1}(x)-g_2^{i}(x),\\
\alpha_2^i&:=\|f_2^i\|_\infty |I(y_0,2^i)| \quad{\rm and}\\
a_2^i(x)&:= (\alpha_2^i)^{-1} f_2^i(x).
\end{align*}
Similarly, for $i \in \{1, 2, . . . , i_0\}$, we can verify that each $a_2^i$ is an $H^1_b(\R)$ $L^\infty$-atom and the coefficient satisfies
\begin{align*}
|\alpha_2^i|\lesssim {1}.
\end{align*}

Combining the decompositions above, we obtain that
\begin{align*}
f(x)=\sum_{j=1}^2\bigg(\sum_{i=1}^{i_0} \alpha_j^i a_j^i(x)+ g_j^{i_0}(x)\bigg).
\end{align*}
We now consider the tail $g_1^{i_0}(x) + g_2^{i_0}(x)$. To handle that, consider
the interval $\overline I$  centered at the point
$  {x_0+y_{0}\over 2}$
with sidelength $2^{i_0+1}$. Then, it is clear that $I(x_0,1)\cup I(y_0,1) \subset \overline I$,
and that $I(x_0,2^{i_0}), I(y_0,2^{i_0}) \subset \overline I$.
Thus, since by hypothesis $\int_{\mathbb{R}} f(y)\,b(y)\,dy=0$, we get that 
$${\chi_{\overline I}(x)\over  \int_{\overline I}b(z)\, dz }\int_{I(x_0,1)}f_1(y)\,b(y)\,dy+{\chi_{\overline I}(x)\over \int_{\overline I}b(z)\,dz }\int_{I(y_0,1)}f_2(y)\,b(y)\,dy=0.$$
Hence, we write
\begin{align*}
g_1^{i_0}(x) + g_2^{i_0}(x)&=\bigg(g_1^{i_0}(x) -
{\chi_{\overline I}(x)\over  \int_{\overline I}b(z)\,dz }\int_{I(x_0,1)}f_1(y)\,b(y)\,dy\bigg) \\
&\quad+ \bigg( g_2^{i_0}(x) -
{\chi_{\overline I}(x)\over  \int_{\overline I}b(z)\,dz }\int_{I(y_0,1)}f_2(y)\,b(y)\,dy\bigg)\\
 &=: f_1^{i_0+1}(x)+f_2^{i_0+1}(x).
\end{align*}
For $j=1,2$, we now define 
\begin{align*}
\alpha_j^{i_0+1}&:=\|f_j^{i_0+1}\|_\infty |\overline{I}| \quad{\rm and}\\  
a_j^{i_0+1}(x)&:= (\alpha_j^{i_0+1})^{-1} f_j^{i_0+1}(x).
\end{align*}
Again we can verify that for $j=1,2$, $a_j^{i_0+1}$ is an $H^1_b(\R)$ $L^\infty$-atom supported in $I$ with
the appropriate size and cancellation conditions
$$\|a_j^{i_0+1}\|_\infty \leq {1 /  |\overline I |} \quad\quad \mbox{and} \quad\quad \int_{\mathbb{R}} a_j^{i_0+1}(x)\,b(x)\, dx =0.$$
Moreover, we also have 
$$|\alpha_j^{i_0+1}| \lesssim {1 }.$$

Thus, we obtain an atomic decomposition for $f$ 
\begin{align*}
f(x)=\sum_{j=1}^2\sum_{i=1}^{i_0+1} \alpha_j^i a_j^i(x),
\end{align*}
which implies that $f\in H^1_b(\mathbb{R})$ and
\begin{align*}
\|f\|_{H^1_b(\mathbb{R})} &\leq \sum_{j=1}^2\sum_{i=1}^{i_0+1}  |\alpha_j^i| \lesssim  \sum_{j=1}^2\sum_{i=1}^{i_0+1}  {1}
\lesssim {\log M}.
\end{align*}
This  finishes the proof of Lemma \ref{lemma Hardy}.
\end{proof}

Repeating the proof  we get that if $r>0$, $\int_{\mathbb{R}}f(x)\, b(x)\, dx=0$ and $|f(x)| \leq \chi_{I(x_0,r)}(x) + \chi_{I(y_0,r)}(x)$
where $|x_0-y_0|\geq rM$, then  $f\in H^1_b(\mathbb{R})$ and
\begin{equation}\label{rescaled-lem-Hardy}
 \| f \|_{H^1_b(\mathbb{R})} \lesssim  r \, \log{M}. 
 \end{equation}
The additional $r$ comes from the estimates of the coefficients $|\alpha^i_j |\lesssim r$ for $i=1,\dots, i_0+1$ and $j=1,2$ where $i_0\sim \log M$.

Ideally, given an $H^1_{b}(\mathbb{R})$-atom $a$, we would like to find $g,h\in L^2(\mathbb{R})$ such that $\Pi_b(g,h)=a$ pointwise.  While this can not be accomplished in general, the theorem below shows that it is ``almost'' true.

\begin{theorem}
\label{thm:ApproxFactorization}
For every $H^1_{b}(\mathbb{R})$ $L^\infty$-atom $a(x)$ and for all $\varepsilon>0$ there exist
a large positive number $M$ and $g,h\in L^\infty(\mathbb{R})$ with compact supports such that:
$$
\left\Vert a-\Pi_b(h,g)\right\Vert_{H^1_{b}(\mathbb{R})}<\varepsilon
$$
and $\left\Vert g\right\Vert_{L^2(\mathbb{R})}\left\Vert h\right\Vert_{L^2(\mathbb{R})}\lesssim  M$.
\end{theorem}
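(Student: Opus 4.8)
The plan is to run Uchiyama's construction directly for the bilinear form $\Pi_b$. Fix an $H^1_b(\mathbb{R})$ $L^\infty$-atom $a$ supported on $I:=I(x_0,r)$ with $\|a\|_{L^\infty(\mathbb{R})}\le|I|^{-1}$ and $\int_{\mathbb{R}}a\,b=0$, and let $\varepsilon>0$. I would fix a large $M$ (pinned down only at the end), put $y_0:=x_0+Mr$ and $\widetilde I:=I(y_0,r)$, so that $I\cap\widetilde I=\emptyset$ and ${\rm dist}(I,\widetilde I)\ge(M-2)r$, and set
\[
 g:=\chi_{\widetilde I},\qquad \kappa:=\mathscr{C}_{\Gamma}(g)(x_0),\qquad h:=\frac{b\,a}{\kappa},
\]
so $g,h\in L^\infty(\mathbb{R})$ have compact supports, $h$ supported in $I$. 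The mechanism is: since ${\rm supp}\,g$ lies far from $I$, $\mathscr{C}_{\Gamma}(g)$ is nearly the constant $\kappa$ on $I$, hence on $I$ (where $g\equiv 0$) one has $\Pi_b(h,g)=b^{-1}h\,\mathscr{C}_{\Gamma}(g)=\kappa^{-1}a\,\mathscr{C}_{\Gamma}(g)\approx a$; on $\widetilde I$ (where $h\equiv 0$ and $a\equiv 0$), $\Pi_b(h,g)$ collapses to $-b^{-1}g\,\mathscr{C}_{\Gamma}^{*}(h)$, which is small because $h$ inherits the cancellation $\int_{\mathbb{R}}h=\kappa^{-1}\int_{\mathbb{R}}a\,b=0$ from the atom; and off $I\cup\widetilde I$ both $g$ and $h$ vanish, so $\Pi_b(h,g)-a$ is supported in $I\cup\widetilde I$.

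The key quantitative input --- and the step I expect to be \emph{the main obstacle} --- is the two-sided estimate $c/M\le|\kappa|\le C/M$ with $c,C$ depending only on $\|A'\|_{L^\infty(\mathbb{R})}$, which is fixed. Writing $\Gamma(t):=t+iA(t)$ and $\phi(y):=\Gamma(y)-\Gamma(x_0)$, we have $\phi'=1+iA'$ a.e.\ and $\phi$ maps $\widetilde I$ into the open right half-plane (${\rm Re}\,\phi\ge(M-1)r>0$); as $x_0\notin\widetilde I$ no principal value is needed, and integrating the logarithmic derivative gives the exact identity
\[
 \kappa=\frac{1}{\pi i}\int_{y_0-r}^{y_0+r}\frac{\phi'(y)}{\phi(y)}\,dy=\frac{1}{\pi i}\,\mathrm{Log}\,\frac{\Gamma(y_0+r)-\Gamma(x_0)}{\Gamma(y_0-r)-\Gamma(x_0)}
\]
with $\mathrm{Log}$ the principal branch. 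Since $|\Gamma(y_0\pm r)-\Gamma(y_0)|\le r\sqrt{1+\|A'\|_\infty^2}$ while $|\Gamma(y_0)-\Gamma(x_0)|\ge y_0-x_0=Mr$, both numerator and denominator equal $\bigl(\Gamma(y_0)-\Gamma(x_0)\bigr)\bigl(1+O(1/M)\bigr)$, so $\mathrm{Log}(1+w)=w+O(|w|^2)$ yields
\[
 \kappa=\frac{1}{\pi i}\cdot\frac{\Gamma(y_0+r)-\Gamma(y_0-r)}{\Gamma(y_0)-\Gamma(x_0)}+O\bigl(M^{-2}\bigr).
\]
Here ${\rm Re}\bigl(\Gamma(y_0+r)-\Gamma(y_0-r)\bigr)=2r$, so the numerator has modulus in $[2r,2r\sqrt{1+\|A'\|_\infty^2}]$ while $|\Gamma(y_0)-\Gamma(x_0)|\in[Mr,Mr\sqrt{1+\|A'\|_\infty^2}]$; thus the leading term is comparable to $M^{-1}$, and for $M$ large (depending on $\|A'\|_\infty$) the $O(M^{-2})$ remainder cannot spoil the lower bound. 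This non-degeneracy of $\kappa$ --- that $\mathscr{C}_{\Gamma}(\chi_{\widetilde I})(x_0)$ stays of size $1/M$ no matter how rough $A$ is --- is precisely where one uses that the Cauchy kernel is a logarithmic derivative along $\Gamma$ rather than merely a Calderón--Zygmund kernel; everything else is routine.

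Granting $|\kappa|\approx M^{-1}$, I would conclude as follows. First, $\|g\|_{L^2(\mathbb{R})}=(2r)^{1/2}$ and $\|h\|_{L^2(\mathbb{R})}\le\|h\|_{L^\infty(\mathbb{R})}(2r)^{1/2}\le\|b\|_{L^\infty}|\kappa|^{-1}\|a\|_{L^\infty}(2r)^{1/2}\lesssim (M/r)\,r^{1/2}$, so the powers of $r$ cancel and $\|g\|_{L^2(\mathbb{R})}\|h\|_{L^2(\mathbb{R})}\lesssim M$. Second, on $I$ we have $\Pi_b(h,g)-a=\kappa^{-1}a\bigl(\mathscr{C}_{\Gamma}(g)-\kappa\bigr)$, and the first-variable smoothness of the kernel of $\mathscr{C}_{\Gamma}$ (which is $1+iA'$ times the standard kernel of $\widetilde{\mathscr{C}}_{\Gamma}$, so that smoothness persists up to the bounded factor) gives $|\mathscr{C}_{\Gamma}(g)(x)-\mathscr{C}_{\Gamma}(g)(x_0)|\lesssim r(Mr)^{-2}\|g\|_{L^1}\lesssim M^{-2}$ for $x\in I$, hence $|\Pi_b(h,g)-a|\lesssim (M/r)M^{-2}=(Mr)^{-1}$ on $I$; on $\widetilde I$, using $g\equiv1$ and $h\equiv0$ there together with $\mathscr{C}_{\Gamma}^{*}(h)=b\,(\widetilde{\mathscr{C}}_{\Gamma})^{*}(h)$ from \eqref{adjoint-C_Gamma} and $(\widetilde{\mathscr{C}}_{\Gamma})^{*}=-\widetilde{\mathscr{C}}_{\Gamma}$, one gets $\Pi_b(h,g)-a=\widetilde{\mathscr{C}}_{\Gamma}(h)$, and subtracting off the vanishing quantity $\widetilde C_{\Gamma}(x,x_0)\int_{\mathbb{R}}h$ and invoking the second-variable smoothness of the kernel of $\widetilde{\mathscr{C}}_{\Gamma}$ yields $|\widetilde{\mathscr{C}}_{\Gamma}(h)(x)|\lesssim r(Mr)^{-2}\|h\|_{L^1}\lesssim (Mr)^{-1}$ for $x\in\widetilde I$. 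Therefore $|\Pi_b(h,g)-a|\lesssim (Mr)^{-1}\bigl(\chi_I+\chi_{\widetilde I}\bigr)$; and since $\int_{\mathbb{R}}\Pi_b(h,g)\,b=0$ by the cancellation of $\Pi_b$ established in Lemma~\ref{t:upperbound} and $\int_{\mathbb{R}}a\,b=0$, also $\int_{\mathbb{R}}\bigl(\Pi_b(h,g)-a\bigr)b=0$. Applying the rescaled form \eqref{rescaled-lem-Hardy} of Lemma~\ref{lemma Hardy} to a suitable constant multiple of $\Pi_b(h,g)-a$, whose two supporting intervals of radius $r$ sit at separation $Mr=r\cdot M$, gives
\[
 \bigl\|\Pi_b(h,g)-a\bigr\|_{H^1_b(\mathbb{R})}\lesssim \frac{1}{Mr}\cdot r\log M=\frac{\log M}{M}.
\]
Since $(\log M)/M\to 0$ as $M\to\infty$, a final choice of $M=M(\varepsilon)$ makes the right-hand side $<\varepsilon$; with that $M$ the bound $\|g\|_{L^2(\mathbb{R})}\|h\|_{L^2(\mathbb{R})}\lesssim M$ is the remaining assertion, which completes the argument.
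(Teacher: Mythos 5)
Your proposal is correct and structurally parallel to the paper's: same off-centre test function $g=\chi_{I(y_0,r)}$ with $y_0=x_0+Mr$, same splitting of $a-\Pi_b(h,g)$ into two pieces supported on $I(x_0,r)$ and $I(y_0,r)$, same appeal to the $\widetilde C_\Gamma$ kernel smoothness to get the $(Mr)^{-1}$ pointwise bound, and the same rescaled Lemma~\ref{lemma Hardy} to finish. The variations are real but modest, and worth pointing out.

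\emph{Normalization.} The paper sets $h=-a\big/(\widetilde{\mathscr{C}}_\Gamma)^*(g)(x_0)$, whereas you set $h=ba/\kappa$ with $\kappa=\mathscr{C}_\Gamma(g)(x_0)=\widetilde{\mathscr{C}}_\Gamma(bg)(x_0)$; these differ by the accretive factor and by which Cauchy operator supplies the normalizing constant, and also by the ordering of the arguments in $\Pi_b$ (the paper's displayed computations actually use the other order, a notational slip, but both orderings close the argument with the right choice of $h$). One concrete benefit of your choice is that $h$ inherits $\int_{\mathbb R} h=\kappa^{-1}\int_{\mathbb R}ab=0$ directly, which makes the estimate on $I(y_0,r)$ a one-line second-variable smoothness argument for $\widetilde{\mathscr{C}}_\Gamma(h)$; the paper instead has to unwind $\mathscr{C}_\Gamma(a)=\widetilde{\mathscr{C}}_\Gamma(ba)$ and use $\int ab=0$ there.

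\emph{The lower bound for the normalizing constant.} This is where you genuinely add something. The paper asserts $\big|(\widetilde{\mathscr{C}}_\Gamma)^*(g)(x_0)\big|\gtrsim M^{-1}$ with essentially no argument; it does hold, because for $y\in I(y_0,r)$ the denominator $z(y)=y-x_0+i(A(y)-A(x_0))$ lies in a fixed sector of the right half-plane and so $\operatorname{Re}\big(1/z(y)\big)\gtrsim (Mr)^{-1}$, and integrating over the interval gives the lower bound on the real part of the integral. Your $\kappa=\mathscr{C}_\Gamma(g)(x_0)$ carries the extra $\phi'(y)=1+iA'(y)$ in the numerator, which kills the sign-definiteness of $\operatorname{Re}\big(\phi'/\phi\big)$ when $\|A'\|_\infty>1$, so that elementary positivity argument does not transfer; the logarithmic-derivative identity $\kappa=\frac{1}{\pi i}\,\mathrm{Log}\,\frac{\Gamma(y_0+r)-\Gamma(x_0)}{\Gamma(y_0-r)-\Gamma(x_0)}$ is exactly the right replacement, and your Taylor expansion of $\mathrm{Log}(1+w)$ gives the two-sided $|\kappa|\approx M^{-1}$ cleanly once $M$ is taken large relative to $\|A'\|_\infty$. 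So your modification makes the nondegeneracy step slightly harder but resolves it with an exact identity that the paper never exploits; everything else matches.

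One small bookkeeping point: you need $M$ large enough (depending on $\|A'\|_\infty$) both so that $I(x_0,r)\cap I(y_0,r)=\emptyset$ and so that the $O(M^{-2})$ remainder in the $\mathrm{Log}$ expansion does not overwhelm the $M^{-1}$ main term; since $\|A'\|_\infty$ is a fixed ambient constant this is harmless, but it is worth flagging explicitly because the implicit constants in $\|g\|_{L^2}\|h\|_{L^2}\lesssim M$ and in $\|a-\Pi_b(h,g)\|_{H^1_b}\lesssim M^{-1}\log M$ both depend on $\|A'\|_\infty$ through $|\kappa|$.
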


\begin{proof}

Let $a(x)$ be an $H^1_{b}(\mathbb{R})$ $L^\infty$-atom, supported in $I(x_0,r)$, the interval centred at $x_0$ with radius $r$.  We first consider the construction of the explicit bilinear form $\Pi_b(h,g)$  and the approximation 
to $a(x)$.  To begin with, 
fix $\varepsilon>0$.  Choose $M\in [100,\infty)$ sufficiently large so that $$ M^{-1}\,{\log M} <\varepsilon.$$ Now select $y_0\in\mathbb{R}$
such that  $ y_0-x_0={Mr}$.
For this $y_0$ and  for any $y\in I(y_0,r)$ and any $x\in I(x_0,r)$, we  have
$|x-y|>{Mr/2}$.  We set
\begin{align}\label{gh}
 g(x):=\chi_{I(y_0,r)}(x)\quad{\rm and}\quad h(x):= -\frac{a(x)}{(\widetilde{\mathscr{C}}_{\Gamma})^*(g)(x_0)}.  
\end{align}
We first note that
\begin{align}\label{claim degenerate}
\left| (\widetilde{\mathscr{C}}_{\Gamma})^*(g)(x_0) \right|\gtrsim \, M^{-1}.
\end{align}
In fact, from the expression of $(\widetilde{\mathscr{C}}_{\Gamma})^*(g)(x_0)=-\widetilde{\mathscr{C}}_{\Gamma}(g)(x_0)$ 
we have that
\begin{align*}
| (\widetilde{\mathscr{C}}_{\Gamma})^* (g)(x_0)| &=   \left|  \frac{1}{\pi i}\int_{ I(y_0,r)} \frac{1}{y-x_0 + i(A(y) - A(x_0))}\,dy\right| \gtrsim {\, M^{-1}}.
\end{align*}
As a consequence, we get that the claim \eqref{claim degenerate} holds.

From the definitions of the functions $g$ and $h$, we obtain that $\operatorname{supp}(g)=I(y_0,r)$ and $\operatorname{supp}(h)=I(x_0,r)$. Moreover, from \eqref{claim degenerate}  and the size estimate for the atom, we obtain that
$$\|g\|_{L^\infty(\mathbb{R})} \approx 1\quad\textnormal{ and }\quad \|h\|_{L^\infty(\mathbb{R})} =  \frac{1}{|(\widetilde{\mathscr{C}}_{\Gamma})^*(g)(x_0)|} \|a\|_{L^\infty(\mathbb{R})}\lesssim  M r^{-1}. $$
And we also get that
$$\|g\|_{L^2(\mathbb{R})} \approx r^{1/2}\quad\textnormal{ and }\quad \|h\|_{L^2(\mathbb{R})} =  \frac{1}{|(\widetilde{\mathscr{C}}_{\Gamma})^*(g)(x_0)|} \|a\|_{L^2(\mathbb{R})}\lesssim M r^{-1/2}. $$
Hence $\|g\|_{L^2(\mathbb{R})} \|h\|_{L^2(\mathbb{R})} \lesssim M$.  Now write
\begin{align*}
a(x)-\Pi_b(h,g)(x)&=a(x)-{1\over b(x)}\big(g(x)\cdot \mathscr{C}_{\Gamma}(h)(x)-h(x)\cdot \mathscr{C}_{\Gamma}^*(g)(x)\big)\\
&= \Big(a(x)+ {1\over b(x)}h(x)\cdot \mathscr{C}_{\Gamma}^*(g)(x) \Big) - {1\over b(x)} g(x) \cdot \mathscr{C}_{\Gamma}(h)(x)\\
&=: W_1(x)+W_2(x).
\end{align*}

We first turn to $W_1(x)$. By definition and using equation~\eqref{adjoint-C_Gamma}, we have that
\begin{align*}
W_1(x)&=a(x)+ {1\over b(x)}\Big(-\frac{a(x)}{(\widetilde{\mathscr{C}}_{\Gamma})^*(g)(x_0)}\Big)\cdot \big(b(x)\cdot (\widetilde{\mathscr{C}}_{\Gamma})^*( g)(x)\big)\\
&=a(x)\Big[1 -   \frac{(\widetilde{\mathscr{C}}_{\Gamma})^*( g)(x)}{(\widetilde{\mathscr{C}}_{\Gamma})^*(g)(x_0)}\Big] \\
& = a(x) \cdot  \frac{(\widetilde{\mathscr{C}}_{\Gamma})^*(g)(x_0)-(\widetilde{\mathscr{C}}_{\Gamma})^*( g)(x)}{(\widetilde{\mathscr{C}}_{\Gamma})^*(g)(x_0)}.
\end{align*}
Thus, since $(\widetilde{\mathscr{C}}_{\Gamma})^*=-\widetilde{\mathscr{C}}_{\Gamma}$, we get that for 
every $x\in I(x_0,r)$,
\begin{align*}
|W_1(x)| &= |a(x)| \cdot  \frac{|\widetilde{\mathscr{C}}_{\Gamma}(g)(x_0)-\widetilde{\mathscr{C}}_{\Gamma}( g)(x)|}{|\widetilde{\mathscr{C}}_{\Gamma}(g)(x_0)|}\\
& \leq  C  M \|a\|_{L^\infty(\mathbb{R})}  \int_{I(y_0,r)} \frac{|x-x_0|}{|x-y|^{2} } \,dy\\
&\leq  C  M r^{-1}\, r \,{r}\,{(Mr)^{-2} }  
\, = \, C\,(M r)^{-1}.
\end{align*}
Here we used the standard smoothness estimate for the Calder\'on-Zygmund kernel $\widetilde{C}_{\Gamma}(x,y)$ of $\widetilde{\mathscr{C}}_{\Gamma}$, see \cite[Lemma 3.3.]{LNWW} or \cite[Example 4.1.6]{Gra}.
Since it is clear that $W_1(x)$ is supported in $I(x_0,r)$,
we obtain that $$ |W_1(x)|\leq  C \, (M r)^{-1} \chi_{I(x_0,r)}(x).$$

We next estimate $W_2(x)$. By definition, it is clear that $W_2(x)$ is supported in $I(y_0,r)$, and we have
\begin{align*}
W_2(x) &= {1\over b(x)}\, \chi_{I(y_0,r)}(x) \cdot \mathscr{C}_{\Gamma}\Big(-\frac{a(\cdot)}{(\widetilde{\mathscr{C}}_{\Gamma})^*(g)(x_0)}\Big)(x)\\
 &= -{1\over b(x)}\, \chi_{I(y_0,r)}(x)\, \frac{1}{(\widetilde{\mathscr{C}}_{\Gamma})^*(g)(x_0)} \cdot  \mathscr{C}_{\Gamma}(a(\cdot))(x)\\
 &= -{1\over b(x)}\, \chi_{I(y_0,r)}(x)\, \frac{1}{(\widetilde{\mathscr{C}}_{\Gamma})^*(g)(x_0)}\   \frac{1}{\pi i}\int_{I(x_0,r)} \frac{(1 + iA'(y))\, a(y)}{y-x + i(A(y) - A(x))}\,dy\\
  &= -{1\over b(x)}\, \chi_{I(y_0,r)}(x)\, \frac{1}{(\widetilde{\mathscr{C}}_{\Gamma})^*(g)(x_0)}\   \frac{1}{\pi i}\int_{I(x_0,r)} \widetilde{\mathscr{C}}_{\Gamma}(x,y) \,{ b(y)\, a(y)}\,dy\\
  &= -{1\over b(x)} \, \chi_{I(y_0,r)}(x)\, \frac{1}{(\widetilde{\mathscr{C}}_{\Gamma})^*(g)(x_0)}\   \frac{1}{\pi i}\int_{I(x_0,r)} \Big(\widetilde{\mathscr{C}}_{\Gamma}(x,y)-\widetilde{\mathscr{C}}_{\Gamma}(x,x_0)\Big) { b(y)\, a(y)}\,dy.
\end{align*}
Here the last equality follows from the cancellation condition of the $H^1_b(\R)$ $L^\infty$-atom~$a(x)$.
Hence, we have for $x\in I(y_0,r)$ (otherwise $W_2(x)=0$ and any estimate will hold)
\begin{align*}
|W_2(x)| &\leq {1\over |b(x)|}\,\chi_{I(y_0,r)}(x)\,\frac{1}{|(\widetilde{\mathscr{C}}_{\Gamma})^*(g)(x_0)|}\   \frac{1}{\pi }\int_{I(x_0,r)} \big|\widetilde{C}_{\Gamma}(x,y)-\widetilde{C}_{\Gamma}(x,x_0)\big|\,  |b(y)|\, |a(y)|\,dy\\
 &\lesssim \,\chi_{I(y_0,r)}(x) \,M  \int_{I(x_0,r)}\|a\|_{L^\infty(\mathbb{R})} \frac{ |y-x_0|}{|x-x_0|^{2} } \,dy\\
&\lesssim (M r)^{-1}  \, \chi_{I(y_0,r)}(x).
\end{align*}
Once again using the smoothness of the kernel $\widetilde{C}_{\Gamma}(x,y)$ of $\widetilde{\mathscr{C}}_{\Gamma}$. 

Combining the estimates of $W_1$ and $W_2$, we obtain that
\begin{align}\label{size}
 \Big|a(x)-\Pi_b(g,h)(x)\Big|\lesssim  {(M r)^{-1}}(\chi_{I(x_0,r)}(x)+\chi_{I(y_0,r)}(x)).
\end{align}
Next we point out that
\begin{align}\label{cancellation R}
\int_{\mathbb{R}} \Big[a(x)-\Pi_b(g,h)(x) \Big] b(x)\, dx= 0,
\end{align}
since $a(x)$ has cancellation with respect to $b(x)$ and the  same holds for $\Pi_b(g,h)(x)$.

Then the size estimate \eqref{size} and the cancellation \eqref{cancellation R},  together with the result in Lemma \ref{lemma Hardy},  more specifically estimate \eqref{rescaled-lem-Hardy}, imply that $a-\Pi_b(g,h)\in H^1_b(\mathbb{R})$ and 
$$ \big\|a-\Pi_b(g,h)\big\|_{H^1_{b}(\mathbb{R})} \lesssim \, M^{-1}\log M <C\epsilon. $$
This proves the result.
\end{proof}

We deduce from the theorem the following corollary concerning $H^1(\mathbb{R})$ $L^{\infty}$-atoms.
\begin{cor} 
For every $H^1(\mathbb{R})$ $L^\infty$-atom $A(x)$ and for all $\varepsilon>0$ there exist
$M>0$ and compactly supported $L^\infty$ functions  $G$ and $H$ such that
$
\|A-\Pi(H,G)\|_{H^1(\mathbb{R})}<\varepsilon
$
and $\| G\|_{L^2(\mathbb{R})}\|H\|_{L^2(\mathbb{R})}\lesssim  M$.
\end{cor}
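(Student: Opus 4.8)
The plan is to deduce this corollary directly from Theorem~\ref{thm:ApproxFactorization}, applied not to the given $H^1(\mathbb{R})$ atom $A$ but to the $H^1_b(\mathbb{R})$ atom $a:=A/b$, and then to translate the resulting approximate $\Pi_b$-factorization back into a $\Pi$-factorization using the two bookkeeping identities already available: the pointwise relation $\frac1b\,\Pi(G,H)(x)=\Pi_b(G,H/b)(x)$ from \eqref{Pib-Pi}, and the definitional identity $\|F\|_{H^1_b(\mathbb{R})}=\|bF\|_{H^1(\mathbb{R})}$. The key preliminary observation is that if $A$ is an $H^1(\mathbb{R})$ $L^\infty$-atom supported on an interval $I$, then $a:=A/b$ is literally an $H^1_b(\mathbb{R})$ $L^\infty$-atom supported on the same $I$: indeed $\|a\|_{L^\infty(\mathbb{R})}\le\|1/b\|_{L^\infty(\mathbb{R})}\|A\|_{L^\infty(\mathbb{R})}\le\|A\|_{L^\infty(\mathbb{R})}\le 1/|I|$, where the accretivity $\mathrm{Re}\,b\ge 1$ forces $|b(x)|\ge 1$ hence $\|1/b\|_{L^\infty(\mathbb{R})}\le 1$, and the cancellation reads $\int_{\mathbb{R}}a(x)\,b(x)\,dx=\int_{\mathbb{R}}A(x)\,dx=0$.

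Concretely I would proceed as follows. First, fix an $H^1(\mathbb{R})$ $L^\infty$-atom $A$ supported on $I(x_0,r)$ and fix $\varepsilon>0$; set $a:=A/b$, an $H^1_b(\mathbb{R})$ $L^\infty$-atom on $I(x_0,r)$ by the observation above. Next, apply Theorem~\ref{thm:ApproxFactorization} to $a$ and $\varepsilon$ to obtain a large $M>0$ and compactly supported functions $g,h\in L^\infty(\mathbb{R})$ with $\|a-\Pi_b(h,g)\|_{H^1_b(\mathbb{R})}<\varepsilon$ and $\|g\|_{L^2(\mathbb{R})}\|h\|_{L^2(\mathbb{R})}\lesssim M$. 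Then define $G:=bg$ and $H:=h$; both are compactly supported and in $L^\infty(\mathbb{R})$ since $b\in L^\infty(\mathbb{R})$. Applying \eqref{Pib-Pi} with the arguments $G'=H=h$ and $H'=G=bg$ gives $\Pi(H,G)=\Pi(h,bg)=b\,\Pi_b(h,g)$, so that $A-\Pi(H,G)=b\,(a-\Pi_b(h,g))$ and therefore $\|A-\Pi(H,G)\|_{H^1(\mathbb{R})}=\|a-\Pi_b(h,g)\|_{H^1_b(\mathbb{R})}<\varepsilon$. Finally $\|G\|_{L^2(\mathbb{R})}\|H\|_{L^2(\mathbb{R})}\le\|b\|_{L^\infty(\mathbb{R})}\|g\|_{L^2(\mathbb{R})}\|h\|_{L^2(\mathbb{R})}\lesssim M$, which is exactly the claimed estimate.

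There is essentially no analytic obstacle: all the real work is carried out inside Theorem~\ref{thm:ApproxFactorization}. The only points requiring care are formal ones — correctly matching the slots of $\Pi$ and $\Pi_b$ when invoking \eqref{Pib-Pi}, and verifying that $a=A/b$ is an honest $H^1_b$ atom (with the right size normalization $1/|I|$, not a bounded multiple of it), which is precisely where the accretivity bound $\|1/b\|_{L^\infty(\mathbb{R})}\le 1$ is used. Even if a harmless constant had crept into the normalization, it could be absorbed by applying the theorem with $\varepsilon$ replaced by a fixed multiple, since $\|b\|_{L^\infty(\mathbb{R})}<\infty$.
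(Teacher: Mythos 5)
Your proposal is correct and follows essentially the same route as the paper's own proof: observe that $A/b$ is an $H^1_b(\mathbb{R})$ $L^\infty$-atom, apply Theorem~\ref{thm:ApproxFactorization}, and translate back via the identity \eqref{Pib-Pi} together with $\|F\|_{H^1_b(\mathbb{R})}=\|bF\|_{H^1(\mathbb{R})}$. Your write-up is actually a bit more careful than the paper's on two small points — you verify the atom normalization for $A/b$ explicitly using $|b|\geq 1$, and you match the slots of $\Pi$ versus $\Pi_b$ consistently with the statements of the theorem and corollary — but the underlying argument is the same.
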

\begin{proof}
Note that if $A$ is an $H^1(\mathbb{R})$ $L^{\infty}$-atom then $A/b$ is an $H^1_b(\mathbb{R})$ $L^{\infty}$-atom, hence by Theorem \ref{thm:ApproxFactorization}  for all $\varepsilon>0$ there are  $M>0$ and compactly supported $L^{\infty}$ functions $g,h$ such that $\| A/b - \Pi_b(g,h)\|_{H^1_b(\mathbb{R})} \lesssim \varepsilon$. By \eqref{Pib-Pi} $\Pi_b(g,h)=b\,\Pi(g,bh)$, this implies $\|A - b\,\Pi_b(g,h)\|_{H^1(\mathbb{R})}=\|A-\Pi(g,bh)\| \lesssim \varepsilon$. Let $G=g$ and $H=bh$,
these are compactly supported $L^{\infty}$ functions, furthermore $\| G\|_{L^2(\mathbb{R})}\|H\|_{L^2(\mathbb{R})}\approx \| g\|_{L^2(\mathbb{R})}\|h\|_{L^2(\mathbb{R})}\lesssim  M$.
\end{proof}

With this approximation result, we can now prove the main result. 
\begin{proof}[Constructive Proof of Theorem \ref{t:UchiyamaFactor}] 
By Theorem \ref{t:upperbound} we have that $\left\Vert \Pi_b(g,h)\right\Vert_{H^1_{b}(\mathbb{R})}\lesssim  \left\Vert g\right\Vert_{L^2(\mathbb{R})} \left\Vert h\right\Vert_{L^2(\mathbb{R})}$. It follows that if $f\in H^1_b(\mathbb{R})$  then  for any representation of the form $$f=\sum_{k=1}^\infty\sum_{j=1}^\infty\lambda_j^k\, \Pi_b(g_j^{k}, h_j^{k})$$ we have that
$$
\left\Vert f\right\Vert_{H^1_{b}(\mathbb{R})} \lesssim  \sum_{k=1}^\infty\sum_{j=1}^{\infty}\left\vert \lambda_j^k\right\vert \left\Vert g_j^k\right\Vert_{L^2(\mathbb{R})}\left\Vert h_j^k\right\Vert_{L^2(\mathbb{R})}.
$$
Consequently,
$$
\left\Vert f\right\Vert_{H^1_{b}(\mathbb{R})} \lesssim  \inf\left\{\sum_{k=1}^\infty\sum_{j=1}^{\infty}\left\vert \lambda_j^k\right\vert \left\Vert g_j^k\right\Vert_{L^2(\mathbb{R})}\left\Vert h_j^k\right\Vert_{L^2(\mathbb{R})}: f=\sum_{k=1}^\infty\sum_{j=1}^{\infty}\lambda_j^k \,\Pi_b(g_j^k, h_j^k) \right\}.
$$

We turn to show that the other inequality holds and that it is possible to obtain such a decomposition for any $f\in H^1_{b}(\mathbb{R}^n)$.  By the definition of $H^1_{b}(\mathbb{R})$,  for any $f\in H^1_{b}(\mathbb{R})$ we can find a sequence $\{\lambda_{j}^{1}\}\in \ell^1$ and sequence of $H^1_{b}(\mathbb{R})$ $L^{\infty}$-atoms $a_j^{1}$ so that $f=\sum_{j=1}^{\infty} \lambda_j^{1} a_{j}^{1}$ and $\sum_{j=1}^{\infty} \left\vert \lambda_j^{1}\right\vert \leq C_0 \left\Vert f\right\Vert_{H^1_b(\mathbb{R})}$.

We explicitly track the implied absolute constant $C_0$ appearing from the atomic decomposition since it will play a role in the convergence of the approach.  Fix $\varepsilon>0$ so that $\varepsilon C_0<1$. Then we also have a large positive number $M$ with ${M^{-1}\log M }<\epsilon$.  We apply Theorem \ref{thm:ApproxFactorization} to each atom $a_{j}^{1}$.  So there exists $g_j^{1}, h_j^{1}\in L^\infty(\mathbb{R}^n)$ with compact supports and satisfying  $\left\Vert g_j^{1}\right\Vert_{L^2(\mathbb{R})}\left\Vert h_j^{1}\right\Vert_{L^2(\mathbb{R})}\lesssim M $ and
\begin{equation*}
\left\Vert a_j^{1}-\Pi_b(g_j^{1}, h_j^{1})\right\Vert_{H^1_{b}(\mathbb{R})}<\varepsilon\quad {\rm for\ all\ } j>0.
\end{equation*}
Now note that we have
\begin{eqnarray*}
f & = & \sum_{j=1}^{\infty} \lambda_{j}^{1} a_j^{1}=   \sum_{j=1}^{\infty} \lambda_{j}^{1} \,\Pi_b(g_j^{1}, h_j^{1})+\sum_{j=1}^{\infty} \lambda_{j}^{1} \left(a_j^{1}-\Pi_b(g_j^{1}, h_j^{1})\right) :=  M_1+E_1.
\end{eqnarray*}
Observe that we have
\begin{eqnarray*}
\left\Vert E_1\right\Vert_{H^1_{b}(\mathbb{R})} & \leq & \sum_{j=1}^{\infty} \left\vert \lambda_j^{1}\right\vert \left\Vert a_j^{1}-\Pi_b(g_j^{1}, h_j^{1})\right\Vert_{H^1_{b}(\mathbb{R})} \leq  \varepsilon \sum_{j=1}^{\infty} \left\vert \lambda_j^{1}\right\vert \leq \varepsilon C_0\left\Vert f\right\Vert_{H^1_{b}(\mathbb{R})}.
\end{eqnarray*}
We now iterate the construction on the function $E_1$.  Since $E_1\in H^1_{b}(\mathbb{R})$, we can apply the atomic decomposition in $H^1_{b}(\mathbb{R})$ to find a sequence $\{\lambda_j^{2}\}\in \ell^1$ and a sequence of $H^1_{b}(\mathbb{R})$  $L^{\infty}$-atoms $\{a_j^{2}\}$ so that $E_1=\sum_{j=1}^{\infty} \lambda_j^{2} a_{j}^{2}$ and
$$
\sum_{j=1}^{\infty} \left\vert \lambda_j^{2}\right\vert \leq C_0 \left\Vert E_1\right\Vert_{H^1_b(\mathbb{R})}\leq \varepsilon C_0^2 \left\Vert f\right\Vert_{H^1_{b}(\mathbb{R})}.
$$

Again, we will apply Theorem \ref{thm:ApproxFactorization} to each $L^{\infty}$-atom $a_{j}^{2}$.  So there exist $g_j^{2}, h_j^{2}\in L^\infty(\mathbb{R})$ with compact supports and satisfying  $\left\Vert g_j^{2}\right\Vert_{L^2(\mathbb{R})}\left\Vert h_j^{2}\right\Vert_{L^2(\mathbb{R})}\lesssim  M$ and
\begin{equation*}
\left\Vert a_j^{2}-\Pi_b(g_j^{2}, h_j^{2})\right\Vert_{H^1_{b}(\mathbb{R})}<\varepsilon \quad \mbox{for all  $j>0$}.
\end{equation*}
We then have that:
\begin{eqnarray*}
E_1 & = & \sum_{j=1}^{\infty} \lambda_{j}^{2} a_j^{2}=   \sum_{j=1}^{\infty} \lambda_{j}^{2}\, \Pi_b(g_j^{2}, h_j^{2})+\sum_{j=1}^{\infty} \lambda_{j}^{2} \left(a_j^{2}-\Pi_b(g_j^{2}, h_j^{2})\right) :=  M_2+E_2.
\end{eqnarray*}
But, as before observe that
\begin{eqnarray*}
\left\Vert E_2\right\Vert_{H^1_b(\mathbb{R})} & \leq & \sum_{j=1}^{\infty} \left\vert \lambda_j^{2}\right\vert \left\Vert a_j^{2}-\Pi_b(g_j^{2}, h_j^{2})\right\Vert_{H^1_{b}(\mathbb{R})} \leq  \varepsilon \sum_{j=1}^{\infty} \left\vert \lambda_j^{2}\right\vert \leq \left(\varepsilon C_0\right)^{2}\left\Vert f\right\Vert_{H^1_{b}(\mathbb{R})}.
\end{eqnarray*}
And, this implies for $f$ that we have:
\begin{eqnarray*}
f & = & \sum_{j=1}^{\infty} \lambda_{j}^{1} a_j^{1} =   \sum_{j=1}^{\infty} \lambda_{j}^{1} \,\Pi_b(g_j^{1}, h_j^{1})+\sum_{j=1}^{\infty} \lambda_{j}^{1} \left(a_j^{1}-\Pi_b(g_j^{1}, h_j^{1})\right)\\
 & = & M_1+E_1=M_1+M_2+E_2 =  \sum_{k=1}^{2} \sum_{j=1}^{\infty} \lambda_{j}^{k} \,\Pi_b(g_j^{k}, h_j^{k})+E_2.
\end{eqnarray*}

Repeating this construction for each $1\leq k\leq K$ produces functions $g_j^{k}, h_j^{k}\in L^\infty(\mathbb{R})$ with compact supports and  satisfying $\left\Vert g_j^{k}\right\Vert_{L^2(\mathbb{R})}\left\Vert h_j^{k}\right\Vert_{L^2(\mathbb{R})}\lesssim  M $ for all $j>0$, sequences $\{\lambda_{j}^{k}\}_{j>0}\in \ell^1$ with $\left\Vert \{\lambda_{j}^{k}\}_{j>0}\right\Vert_{\ell^1}\leq \varepsilon^{k-1} C_0^k \left\Vert f\right\Vert_{H^1_{b}(\mathbb{R})}$, and a function $E_K\in H^1_{b}(\mathbb{R})$ with $\left\Vert E_K\right\Vert_{H^1_{b}(\mathbb{R})}\leq \left(\varepsilon C_0\right)^{K}\left\Vert f\right\Vert_{H^1_{b}(\mathbb{R})}$ so that
$$
f=\sum_{k=1}^{K} \sum_{j=1}^\infty \lambda_{j}^{k} \,\Pi_b(g_j^{k}, h_j^{k})+E_K.
$$
Passing $K\to\infty$ gives the desired decomposition of $$f=\sum_{k=1}^{\infty} \sum_{j=1}^\infty \lambda_{j}^{k}\, \Pi_b(g_j^{k}, h_j^{k}).  $$
We also have that:
$$
\sum_{k=1}^{\infty} \sum_{j=1}^\infty\left\vert \lambda_{j}^{k}\right\vert \leq \sum_{k=1}^{\infty} \varepsilon^{-1} (\varepsilon C_0)^{k} \left\Vert f\right\Vert_{H^1_{b}(\mathbb{R})}= \frac{ C_0}{1-\varepsilon C_0}\left\Vert f\right\Vert_{H^1_{b}(\mathbb{R})}.
$$
Therefore $\{\lambda_j^k\}_{j,k\in\mathbb{Z}}$ is in $\ell^1$ as claimed. This finishes the proof of Theorem~\ref{t:UchiyamaFactor}.
\end{proof}
The weak-factorization given by Theorem~\ref{t:UchiyamaFactor}  can be used to prove the lower bound of Theorem~\ref{th upper}, the same way it is done in  for example \cite{LW}.  However we used the upper bound of 
Theorem~\ref{th upper} to prove Lemma~\ref{t:upperbound} responsible for the upper bound in Theorem~\ref{t:UchiyamaFactor}.

\bigskip

{\bf Acknowledgments:} 
 J. Li is supported by ARC DP 160100153 and Macquarie University New Staff Grant.
B. D. Wick's research supported in part by National Science Foundation DMS grants \#1560955 and \#1800057.
\begin{bibdiv}
\begin{biblist}

\bib{CRW}{article}{
   author={Coifman, R. R.},
   author={Rochberg, R.},
   author={Weiss, G.},
   title={Factorization theorems for Hardy spaces in several variables},
   journal={Ann. of Math. (2)},
   volume={103},
   date={1976},
   number={3},
   pages={611--635}
}

\bib{CW}{article}{
   author={Coifman, R. R.},
   author={Weiss, G.},
   title={Extensions of hardy spaces and their use in analysis},
   journal={Bulletin Amer. Math. Soc.} 
   volume={83}
   date={1977}
   number={4}
   pages={569-645} }


\bib{DLWY}{article}{
   author={Duong, X. T.},
   author={Li, J.},
   author={Wick, B. D.},
   author={Yang, D. Y.},
   title={Factorization for Hardy spaces and characterization for
BMO spaces via commutators in the Bessel setting},
   journal={Indiana Univ. Math. J.},
   volume={66},
   date={2017},
   pages={1081--1106}
}

%




\bib{FS}{article}{
   author={Fefferman, C.},
   author={Stein, E. M.},
   title={$H^{p}$ spaces of several variables},
   journal={Acta Math.},
   volume={129},
   date={1972},
   number={3-4},
   pages={137--193}
}

\bib{Gra}{book}{
   author={Grafakos, L.},
   title={Modern Fourier analysis},
   series={Graduate Texts in Mathematics},
   volume={250},
   edition={3},
   publisher={Springer, New York},
   date={2014},
   pages={xvi+624}
}





\bib{LNWW}{article}{
   author={Li, J.},
   author={Nguyen, T.T.},
    author={Ward, L. A.},
    author={Wick, B. D.},
    title={The Cauchy integral, bounded and compact commutators},
    journal={Studia},
    volume={1},
    number={1},
    date={2018},
    pages={1--2}
}    

\bib{LW}{article}{
    author={Li, J.},
    author={Wick, B. D.},
    title={Weak factorizations of the Hardy space $H^1(\mathbb{R}^n)$ in terms of multilinear Riesz transforms},
    journal={Canadian Math. Bull.},
    volume={60},
    number={3},
    date={2017},
    pages={517--585}
}

\bib{LW2}{article}{
    author={Li, J.},
    author={Wick, B. D.},
    title={Characterizations of $H^1_{\Delta_N}(\mathbb{R}^n)$   and ${\rm BMO}_{\Delta_N}(\mathbb{R}^n)$ via weak factorizations and commutators},
    journal={J. Func. Anal.},
    volume={272},
    number={12},
    date={2017},
    pages={5384--5416}
}



\bib{Me}{book}{
author={Meyer, Y.}
title={Ondelettes et op\'erateurs, tome 2: Op\'erateurs de Calder\'on-Zygmund}
   volume={250},
   edition={1},
   publisher={Hermann},
   date={1997},
   pages={xii+151}
}

\bib{U1}{article}{
   author={Uchiyama, A.},
   title={On the compactness of operators of Hankel type},
   journal={ T\^ohoku Math. J.}, 
   volume={30}, 
   date={1978}, 
   pages={163--171}
   }

\bib{U}{article}{
   author={Uchiyama, A.},
   title={The factorization of $H^{p}$ on the space of homogeneous type},
   journal={Pacific J. Math.},
   volume={92},
   date={1981},
   number={2},
   pages={453--468}
}




\end{biblist}
\end{bibdiv}

\end{document}